\setlist[itemize]{leftmargin=25pt}
\setlist[enumerate]{leftmargin=25pt}
\newtheorem{theorem}{Theorem}[section]
\newtheorem{lemma}[theorem]{Lemma}
\newtheorem{prop}[theorem]{Proposition}
\newtheorem{cor}[theorem]{Corollary}
\theoremstyle{definition}
\newtheorem{con}[theorem]{Conjecture}
\theoremstyle{remark}
\numberwithin{equation}{section}
\let \la=\lambda
\let \d=\delta
\let \a=\alpha
\let \f=\varphi
\let \b=\beta
\begin{document}
\title[Maximal operator on Banach function spaces]
{A note on the maximal operator on Banach function spaces}

\author[A.K. Lerner]{Andrei K. Lerner}
\address[A.K. Lerner]{Department of Mathematics,
Bar-Ilan University, 5290002 Ramat Gan, Israel}
\email{lernera@math.biu.ac.il}

\thanks{The author was supported by ISF grant no. 1035/21.}

\begin{abstract}
In this note we answer positively to two conjectures proposed by Nieraeth \cite{N23} about
the maximal operator on rescaled Banach function spaces. We also obtain a new criterion
saying when the maximal operator bounded on a Banach function space $X$ is also bounded
on the associate space $X'$.
\end{abstract}

\keywords{Banach function spaces, maximal operator, $A_p$ weights}

\subjclass[2020]{42B25, 46E30}

\maketitle

\section{Introduction}
Let $X$ be a Banach function space over ${\mathbb R}^n$. Denote by $X'$ its associate space. Next, for $p>0$ denote by $X^p$ the space with finite semi-norm
$$\|f\|_{X^p}:=\||f|^{1/p}\|_{X}^p.$$

Let $s>r\ge 1$. Assume that $X$ is $r$-convex and $s$-concave. Define the $(r,s)$-rescaled Banach function space of $X$ as
$$X_{r,s}:=\Big[\big[(X^r)'\big]^{(\frac{s}{r})'}\Big]'.$$
This space was introduced in a recent work by Nieraeth \cite{N23}. The factorization formula (see \cite[Cor. 2.12]{N23})
$$X=(X_{r,s})^{\frac{1}{r}-\frac{1}{s}}\cdot L^s({\mathbb R}^n)$$
makes the space $X_{r,s}$ an important tool in the extrapolation theory for general Banach function spaces (see \cite{N23} and also \cite{LN23}).

Let $M$ be the Hardy--Littlewood maximal operator defined by
$$Mf(x):=\sup_{Q\ni x}\frac{1}{|Q|}\int_Q|f|,$$
where the supremum is taken over all cubes $Q\subset {\mathbb R}^n$ containing the point $x$.

In \cite[Conjectures 2.38, 2.39]{N23} the following conjectures were stated.

\begin{con}\label{con1} {\it Let $s>r\ge 1$ and let $X$ be an $r$-convex and $s$-concave Banach function space over ${\mathbb R}^n$.
Suppose that $M$ is bounded on
$\big[(X^r)'\big]^{(\frac{s}{r})'}$. Then the following are equivalent:
\begin{enumerate}[(i)]
\item
$M$ is bounded on $X_{r,s}$;
\item $M$ is bounded on $X^r$.
\end{enumerate}}
\end{con}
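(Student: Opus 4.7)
The plan is built on the rescaled version of the factorization of Corollary~2.12 in \cite{N23}. Raising $X=(X_{r,s})^{1/r-1/s}\cdot L^s$ to the $r$-th power and using $(Y\cdot Z)^r=Y^r\cdot Z^r$ together with $(L^s)^r=L^{s/r}$ gives
\[
X^r = (X_{r,s})^{(s-r)/s}\cdot L^{s/r}.
\]
Note also that, by Lorentz--Luxemburg, the assumption ``$M$ is bounded on $[(X^r)']^{(s/r)'}$'' is exactly ``$M$ is bounded on the associate space $(X_{r,s})'$.''

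For (i)$\Rightarrow$(ii): combining (i) with the hypothesis gives $M$ bounded on both $X_{r,s}$ and $(X_{r,s})'$, which is the setting of the new criterion announced in the abstract. What the criterion should deliver is a Coifman--Rubio-de-Francia-style self-improvement: there is some $\alpha>1$ with $M$ bounded on $(X_{r,s})^\alpha$. For any $\theta\in(r/s,1)$ and the choices $\alpha_\theta:=\tfrac{s-r}{s(1-\theta)}$ and $q_\theta:=\tfrac{s\theta}{r}$, both $>1$, the identity above reorganizes as the Calderón--Lozanovskii $\theta$-product
\[
X^r = \bigl[(X_{r,s})^{\alpha_\theta}\bigr]^{1-\theta}\cdot \bigl[L^{q_\theta}\bigr]^{\theta}.
\]
Taking $\theta$ close enough to $r/s$ forces $\alpha_\theta\le\alpha$, so both factors support $M$, and Calderón--Lozanovskii interpolation for sublinear operators transfers boundedness of $M$ to $X^r$.

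For (ii)$\Rightarrow$(i): the argument is symmetric via Lozanovskii duality. The factorization has the companion dual identity $(X^r)'=(X_{r,s}')^{(s-r)/s}$, so the hypothesis rewrites the data as $M$ bounded on a rescaling of $(X^r)'$. From (ii) and the hypothesis the new criterion yields $M$ bounded on $(X^r)^\beta$ for some $\beta>1$, and the same Calderón--Lozanovskii interpolation machinery, now applied in dual form to extract $X_{r,s}$ from $(X^r)'$ and a Lebesgue factor, converts this into $M$ bounded on $X_{r,s}$.

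The main obstacle is the self-improvement step: for a generic Banach function space $Z$, boundedness of $M$ on $Z$ alone does not imply $M$ bounded on $Z^\alpha$ for any $\alpha>1$. Extracting such an $\alpha$ requires leveraging the concurrent boundedness of $M$ on the associate, and this is precisely the content of the new criterion announced in the abstract. Producing a clean, symmetric formulation of that criterion--strong enough to drive both implications above and compatible with the Calderón--Lozanovskii rescalings involved--is the heart of the proof.
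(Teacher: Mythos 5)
Your proposal has two halves of very different quality, and the half that fails is precisely the one that was open. First, a factual correction: the ``main obstacle'' you identify is not an obstacle at all, since boundedness of $M$ on a Banach function space $Z$ alone already self-improves to boundedness on $Z^\alpha$ for some $\alpha>1$ (the Lerner--Ombrosi result \cite{LO10}, used in the paper as Proposition 4.1(ii)); the paper's new criterion is not a self-improvement statement but a criterion for when boundedness of $M$ on $Z$ passes to the associate space $Z'$, formulated through the local maximal operator $m_\lambda$ and the function $\f_Z(\la)=\inf_{\|f\|_Z=1}\|m_\la f\|_Z$. With that correction, your $({\rm i})\Rightarrow({\rm ii})$ sketch is essentially sound: the identity $X^r=(X_{r,s})^{(s-r)/s}\cdot L^{s/r}$, the rewriting as a Calder\'on--Lozanovskii product with $\alpha_\theta\downarrow 1$ and $q_\theta>1$ as $\theta\downarrow r/s$, and the pointwise H\"older estimate $M(|f_0|^{1-\theta}|f_1|^\theta)\le (Mf_0)^{1-\theta}(Mf_1)^\theta$ do transfer boundedness from the factors to the product, and the needed boundedness of $M$ on $(X_{r,s})^{\alpha_\theta}$ comes from \cite{LO10}. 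But this is the implication that was already known from \cite{N23}; the paper proves it differently, via $\f$ and Theorem 1.3.

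The genuine gap is in $({\rm ii})\Rightarrow({\rm i})$. Calder\'on-product interpolation only moves boundedness of $M$ from the factors to the product, never from a product to one of its factors, and in every identity you can write down ($X^r=(X_{r,s})^{(s-r)/s}\cdot L^{s/r}$, or any power of it) the space $X_{r,s}$ sits as a factor, not as the product. The ``companion dual identity'' $(X^r)'=(X_{r,s}')^{(s-r)/s}$ is correct, but it is merely a restatement of the standing hypothesis (by Lorentz--Luxemburg, $\big[(X^r)'\big]^{(s/r)'}=X_{r,s}'$), so ``the same machinery applied in dual form'' gives you nothing new: you know $M$ on $X^r$, on $(X^r)'$ and on $X_{r,s}'$, and you must conclude $M$ on $X_{r,s}=(X_{r,s}')'$, i.e.\ you must pass from a space to its associate. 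No duality principle of the form ``$M$ bounded on $Z$ implies $M$ bounded on $Z'$'' exists (its failure is the whole point of the paper), and your proposal supplies no mechanism for this step. The paper closes exactly this gap with Theorems 1.2--1.3: $M$ bounded on $X^r$ forces $\f_{(X^r)'}$ to be unbounded, hence by the scaling law $\f_{Z^p}=\f_Z^p$ the function $\f_{X_{r,s}'}=\f_{[(X^r)']^{(s/r)'}}$ is unbounded, and this together with the assumed boundedness of $M$ on $X_{r,s}'$ yields, via the Fefferman--Stein-type criterion behind Theorem 1.2(ii), that $M$ is bounded on $X_{r,s}$. Without a substitute for that criterion, your argument for $({\rm ii})\Rightarrow({\rm i})$ does not go through.
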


\begin{con}\label{con2} {\it Let $s>r\ge 1$ and let $X$ be an $r$-convex and $s$-concave Banach function space over ${\mathbb R}^n$. Then the following are equivalent:
\begin{enumerate}[(i)]
\item
$M$ is bounded on $X_{r,s}$ and on $(X_{r,s})'$;
\item $M$ is bounded on $X^r$ and $(X')^{s'}$.
\end{enumerate}}
\end{con}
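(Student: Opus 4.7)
Plan: The proof should hinge on pairing Conjecture~\ref{con1} applied to $X$ with its twin applied to the associate space $X'$, supplemented by the new criterion advertised in the abstract. Since $X$ is $r$-convex and $s$-concave, $X'$ is $s'$-convex and $r'$-concave with $r'>s'\geq 1$, so the rescaling $(X')_{s',r'}$ is defined. The critical preliminary identity I would establish is
\[
(X_{r,s})'=(X')_{s',r'},
\]
derived either from the factorization $X = X_{r,s}^{1/r-1/s}\cdot L^{s}$ (Cor.~2.12 in~\cite{N23}) combined with Lozanovskii duality, or by direct calculation from the two definitions using the identity $\frac{1}{s'}-\frac{1}{r'}=\frac{1}{r}-\frac{1}{s}$.

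With this identity in place, Conjecture~\ref{con1} admits two parallel forms: applied to $X$ with parameters $(r,s)$, it asserts that $M$-boundedness on $(X_{r,s})'$ forces $M$ on $X_{r,s}$ to be equivalent to $M$ on $X^{r}$; applied to $X'$ with parameters $(s',r')$, and using $((X')_{s',r'})'=X_{r,s}$, it asserts that $M$-boundedness on $X_{r,s}$ forces $M$ on $(X_{r,s})'$ to be equivalent to $M$ on $(X')^{s'}$. The direction (i)$\Rightarrow$(ii) is then immediate: (i) supplies both hypotheses, so both equivalences activate and yield $M$ on $X^{r}$ and on $(X')^{s'}$.

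The main obstacle is the reverse implication (ii)$\Rightarrow$(i), because neither Conjecture~\ref{con1} hypothesis is available from (ii) alone, so the two equivalences above remain circular. This is precisely where I expect the new criterion of the abstract to be used: it should allow one to deduce $M$-boundedness on one of $X_{r,s}$ or $(X_{r,s})'$ directly from the combined hypothesis in (ii)---say, to pass from $M$ on $X^{r}$ and $M$ on $(X')^{s'}$ to $M$ on $(X_{r,s})'$---without first knowing $M$-boundedness on the other. Once either of the two conditions in (i) has been obtained this way, the matching orientation of Conjecture~\ref{con1} yields the remaining one and closes the argument.
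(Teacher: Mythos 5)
Your direction (i)$\Rightarrow$(ii) is fine and essentially matches the paper: the duality identity $(X_{r,s})'=(X')_{s',r'}$ is indeed the key (the paper simply cites \cite[Pr.~2.14]{N23} for it), and running the Conjecture~\ref{con1} argument for $X$ and for $X'$ gives $M$ on $X^r$ and on $(X')^{s'}$.

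For (ii)$\Rightarrow$(i), however, you have correctly located the obstacle but not resolved it, and your stated expectation --- that the new criterion (Theorem~\ref{cor1}) by itself lets one pass from $M$ on $X^r$ and $(X')^{s'}$ to $M$ on $X_{r,s}$ or $(X_{r,s})'$ --- cannot work as such. The criterion always takes as hypothesis that $M$ is already bounded on the space whose associate you want to control; applied to $X_{r,s}$ or to $X_{r,s}'$ it is therefore exactly as circular as Conjecture~\ref{con1} itself. In the paper the criterion only supplies a preliminary upgrade of (ii): since $M$ is bounded on $(X')^{s'}$ it is bounded on $X'$, so $\f_X$, hence $\f_{X^r}$, is unbounded, and Theorem~\ref{cor1} gives $M$ bounded on $(X^r)'$; symmetrically $M$ is bounded on $[(X')^{s'}]'$. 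The actual break of the circle requires two further ingredients absent from your plan: (a) Rutsky's theorem (Theorem~\ref{rch}, resting on Rubio de Francia's Theorem~\ref{rdf} and vector-valued extrapolation from \cite{N23}), applied with $Y:=[(X^r)']^{(\frac{s}{r})'}$ and $q:=(\frac{s}{r})'$, which converts the $A_1$-regularity of $Y^{1/q}=(X^r)'$ and $(Y^{1/q})'=X^r$ into $A_q$-regularity of $Y'=X_{r,s}$; and (b) the identity $X_{r,s}^{\theta}=[(X')^{s'}]'$ with $\theta=\frac{1}{(r'/s')'}$ from the proof of \cite[Pr.~2.14]{N23}, which shows $M$ is bounded on a power of $X_{r,s}$, so that Rutsky's Proposition~\ref{propr} ($A_p$-regularity plus boundedness of $M$ on some $X^{\d}$ implies boundedness on $X$) yields $M$ bounded on $X_{r,s}$; the associate space is then handled by symmetry (or, as you suggest, by one application of Conjecture~\ref{con1}, which is fine at that point). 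Without some substitute for (a) and (b), your proposal does not prove the implication that constitutes the content of Conjecture~\ref{con2}.
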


Observe that actually both above conjectures are formulated in \cite{N23} in a more general setting of quasi-Banach function spaces and abstract maximal operators.
We restrict ourselves to Banach function spaces and the most standard maximal operator. Note also that the implications $({\rm i})\Rightarrow ({\rm ii})$ were shown in \cite{N23}
for both conjectures and, hence, the question is about the converse implications $({\rm ii})\Rightarrow ({\rm i})$.

In this note we show that both Conjectures \ref{con1} and \ref{con2} are true. A useful tool in our proofs will be a new criterion about the interplay between the
boundedness of $M$ on $X$ and $X'$, which is perhaps of some independent interest. This criterion is formulated in terms of the local maximal operator $m_{\la}$
acting on measurable functions on ${\mathbb R}^n$ by
$$m_{\la}f(x):=\sup_{Q\ni x}(f\chi_Q)^*(\la|Q|),\quad \la\in (0,1),$$
where $f^*$ stands for the standard non-increasing rearrangement of $f$, and the supremum is taken over all cubes $Q\subset {\mathbb R}^n$
containing the point~$x$.

Given a Banach function space $X$, we associate with it the function~$\f_X$ defined by
$$\f_X(\la):=\inf_{\|f\|_{X}=1}\|m_{\la}f\|_{X},\quad \la\in (0,1).$$
Observe that since $m_{\la}f\ge |f|$ a.e. (see, e.g., \cite[Lemma 6]{L04}), we have $\f_X(\la)\ge 1$ for all $\la\in (0,1)$.
Also, it is immediate that the function $\f_X(\la)$ is non-increasing.

We have the following result.

\begin{theorem}\label{mr} Let $X$ be a Banach function space over ${\mathbb R}^n$, and assume that the maximal operator $M$ is bounded on $X$. Then
\begin{enumerate}[(i)]
\item the function $\f_{X'}(\la)$ is unbounded;
\item if there exists $\la_0\in (0,1)$ such that $\f_X(\la_0)>1$, then $M$ is bounded on $X'$.
\end{enumerate}
\end{theorem}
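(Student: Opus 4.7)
Plan. Both parts rest on the elementary distributional identity
\[
\{m_\la g > t\} \;=\; \{M \chi_{\{g>t\}} > \la\}, \qquad g\ge 0,\ t>0,\ \la\in(0,1),
\]
which follows from the rearrangement equivalence $(g\chi_Q)^*(\la|Q|) > t \iff |Q\cap\{g>t\}| > \la|Q|$. A useful consequence, proved by a Vitali covering argument on cubes (if the set $\{M\chi_{\{f>t\}}>\la\}$ covers more than a $\mu$-fraction of a cube $Q$, then Vitali-selected subcubes with $\{f>t\}$-density exceeding $\la$ force the density of $\{f>t\}$ in $Q$ itself to exceed $\la\mu/c_n$ for a dimensional constant $c_n$), is the pointwise estimate $m_\mu(m_\la f) \le m_{\la\mu/c_n}f$, hence the submultiplicativity
\[
\f_X(\la\mu/c_n) \;\ge\; \f_X(\la)\,\f_X(\mu), \qquad \la,\mu\in(0,1).
\]

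For part~(ii), my first step is to iterate this submultiplicativity: starting from $\f_X(\la_0)>1$ and setting $\la=\mu=\la_0$ then proceeding inductively gives $\f_X(\la_0^k/c_n^{k-1})\ge \f_X(\la_0)^k\to\infty$, so $\f_X$ is unbounded along a sequence $\la_k\to 0^+$. The second and harder step is to deduce $M$-boundedness on $X'$ from the conjunction of unbounded $\f_X$ and $M$-boundedness on $X$. Here I would proceed by duality, testing $\|Mg\|_{X'}$ against $f\in X$ with $\|f\|_X\le 1$: for any prescribed $N$ pick $\la_N$ with $\f_X(\la_N)>N$, so that $\|f\|_X\le N^{-1}\|m_{\la_N}f\|_X$ for every $f$. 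Combined with the pointwise estimate $m_\la f\le Mf/\la$ and the assumed $M$-boundedness on $X$, the pairing $\int f\cdot Mg$ should then collapse into a universal multiple of $\|g\|_{X'}$.

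Part~(i) runs the duality in reverse. Assuming $M$-boundedness on $X$, the weak-type estimate $\la\|\chi_{\{Mf>\la\}}\|_X\le C_M\|f\|_X$ combined with the Hölder duality $\|\chi_F\|_X\|\chi_F\|_{X'}\ge|F|$ gives a preliminary lower bound on $\|\chi_{\{M\chi_E>\la\}}\|_{X'}/\|\chi_E\|_{X'}$, and the distributional identity in the form $m_\la g\ge t\chi_{\{M\chi_{\{g>t\}}>\la\}}$ lifts this to arbitrary $g\in X'$. The main obstacle I anticipate is that this naive combination delivers only a constant lower bound for $\f_{X'}(\la)$ rather than the desired unbounded growth; amplifying it to genuine blow-up as $\la\to 0^+$ should require a Rubio de Francia iteration on the predual $X$, producing for each $g$ and $\la$ a test function $f\in X$ with $Mf\le Cf$ that captures the layer-cake expansion of $m_\la g$ across all level sets simultaneously.
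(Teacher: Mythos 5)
Your submultiplicativity observation is correct and genuinely nice: the composition bound $m_{\mu}(m_{\la}f)\le m_{c_n\la\mu}f$ gives $\f_X(c_n\la\mu)\ge\f_X(\la)\f_X(\mu)$, which yields a self-contained proof that $\f_X(\la_0)>1$ forces $\f_X$ to be unbounded (the equivalence (ii)$\Leftrightarrow$(iii) of Theorem \ref{cor1}, which the paper only gets a posteriori). But this is not where Theorem \ref{mr} lives, and at the two decisive points your plan has real gaps. For part (ii), the step you yourself call ``harder'' is the entire theorem, and the ingredients you list cannot produce it: from $\|f\|_X\le N^{-1}\|m_{\la_N}f\|_X$, $m_{\la}f\le\la^{-1}Mf$ and $M:X\to X$ one only gets $\|f\|_X\le (N\la_N)^{-1}C_M\|f\|_X$, i.e.\ the vacuous constraint $\la_N\le C_M/N$; nothing in this moves the maximal operator off $g$ in the pairing $\int |f|\,Mg$, and the inequality that would do so directly, $\int Mg\cdot|f|\lesssim\int|g|\,Mf$, is false (Fefferman--Stein holds only with an exponent $p>1$, which is precisely why Proposition \ref{elpr}(iii) yields boundedness on $(X')^{r}$ for $r<1$ and not on $X'$). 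The paper bridges this gap with two substantive inputs that have no counterpart in your sketch: the pointwise estimate $m_{\la}(Mf)\le C_{n,\la}f^{\#}+Mf$ of \cite{L00}, which combined with $\f_X(\la_0)>1$ is absorbed to give the Fefferman--Stein property $\|f\|_X\le C\|f^{\#}\|_X$, and the characterization of \cite{L10} that, given $M:X\to X$, this property is equivalent to $M:X'\to X'$ (plus a truncation/Fatou argument to pass from $f\in X$ to $f\in S_0({\mathbb R}^n)$). Without these, or some substitute mechanism, the ``pairing collapses'' claim is unsupported.

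For part (i) your instincts are right but the key lemma is missing. You correctly note that the weak-type/H\"older computation gives nothing beyond a bounded quantity (indeed $\f_{X'}\ge 1$ trivially), and a Rubio de Francia-type majorant on the predual is exactly what the paper uses: since $M$ is bounded on $X$, for each $g\in X$ there is $w\ge|g|$ with $\|w\|_X\le C\|g\|_X$ and $[w]_{A_1}$ (hence $[w]_{A_\infty}$) uniformly bounded, via $M_r$ and Coifman--Rochberg. What your sketch does not supply is the quantitative step that converts this majorant into blow-up of $\f_{X'}$: one needs Lemma \ref{pr}, namely $\int|f|w\le 2^{n+1}\la^{\d}\int(m_{\la}f)\,w$ with $\d=\d([w]_{A_\infty})$, proved by a Calder\'on--Zygmund decomposition of $\{M^d\chi_E>\la\}$ together with the reverse-H\"older estimate $w(E\cap Q_j)\le C\la^{\d}w(Q_j)$ on each stopping cube; only then does pairing against $f\in X'$ give $\|f\|_{X'}\le C\la^{\d}\|m_{\la}f\|_{X'}$, i.e.\ $\f_{X'}(\la)\gtrsim\la^{-\d}$. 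The phrase about the test function ``capturing the layer-cake expansion of $m_{\la}g$ across all level sets'' gestures at this but does not prove the $\la^{\d}$ gain, which is the heart of the argument; the condition $Mf\le Cf$ enters precisely through the $A_\infty$ property of the majorant, and you never use it. So: correct skeleton and a worthwhile new lemma for the $\f_X(\la_0)>1\Rightarrow\f_X$ unbounded step, but the essential content of both (i) and (ii) remains unproved.
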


An immediate consequence of Theorem \ref{mr} is the following criterion.

\begin{theorem}\label{cor1} Let $X$ be a Banach function space over ${\mathbb R}^n$, and assume that the maximal operator $M$ is bounded on $X$.
The following statements are equivalent:
\begin{enumerate}[(i)]
\item
$M$ is bounded on $X'$;
\item
the function $\f_X(\la)$ is unbounded;
\item
there exists $\la_0\in (0,1)$ such that $\f_X(\la_0)>1$.
\end{enumerate}
\end{theorem}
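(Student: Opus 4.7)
The plan is to deduce Theorem \ref{cor1} by a short cyclic chain of implications using the two parts of Theorem \ref{mr}, together with the elementary lower bound $\f_X(\la)\ge 1$ recalled in the excerpt.

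First, for (iii)$\Rightarrow$(i) I would simply invoke Theorem \ref{mr}(ii): the existence of some $\la_0\in(0,1)$ with $\f_X(\la_0)>1$ is exactly the hypothesis there, and the conclusion is that $M$ is bounded on $X'$.

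Next, for (ii)$\Rightarrow$(iii) I would argue that an unbounded non-increasing function on $(0,1)$ which everywhere satisfies $\f_X(\la)\ge 1$ must in particular take a value strictly greater than $1$ at some point — indeed, unboundedness forces $\f_X$ to exceed any prescribed number on some subset of $(0,1)$, so there is plenty of room to choose $\la_0$ with $\f_X(\la_0)>1$. This step is trivial and requires no new input.

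Finally, for (i)$\Rightarrow$(ii) I would apply Theorem \ref{mr}(i) to the Banach function space $X'$ in place of $X$. The hypothesis of that theorem — boundedness of $M$ on the ambient space — is now (i). The conclusion is that $\f_{(X')'}$ is unbounded. Using the Lorentz--Luxemburg identity $X''=X$, which is a standard property of Banach function spaces (and which is implicit in Nieraeth's framework in \cite{N23}), this conclusion reads $\f_X$ is unbounded, which is (ii).

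The only point that is not purely formal is the appeal to $X''=X$ in the last step; this is the reason the statement is phrased for Banach function spaces (so that the associate-of-the-associate coincides with the original space). Beyond that, nothing needs to be done: each implication is a direct citation of Theorem \ref{mr} or of the recalled fact $\f_X\ge 1$, which is precisely why the corollary is labeled \emph{immediate}.
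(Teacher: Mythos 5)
Your argument is correct and coincides with the paper's own proof: (iii)$\Rightarrow$(i) is Theorem \ref{mr}(ii), (ii)$\Rightarrow$(iii) is trivial since $\f_X\ge 1$, and (i)$\Rightarrow$(ii) follows by applying Theorem \ref{mr}(i) to $X'$ together with the Lorentz--Luxembourg identity $X''=X$. Nothing is missing.
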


Indeed, observe that the implication $({\rm i})\Rightarrow ({\rm ii})$ follows by taking $X'$ instead of $X$ in Theorem \ref{mr} and taking into account that, by the Lorentz--Luxembourg theorem, $X''=X$. Next, $({\rm ii})\Rightarrow ({\rm iii})$ is trivial. Finally, $({\rm iii})\Rightarrow ({\rm i})$ is contained in item (ii) of Theorem \ref{mr}.

The equivalence $({\rm ii})\Leftrightarrow ({\rm iii})$ in Theorem \ref{cor1} can be written in the following form.

\begin{cor}\label{cor2} Let $X$ be a Banach function space over ${\mathbb R}^n$, and assume that the maximal operator $M$ is bounded on $X$.
Then either $\f_{X}(\la)$ is unbounded or $\f_{X}(\la)\equiv 1$ for all $\la\in (0,1)$.
\end{cor}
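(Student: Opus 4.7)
Corollary \ref{cor2} is essentially the equivalence $({\rm ii})\Leftrightarrow ({\rm iii})$ of Theorem \ref{cor1} rewritten as a dichotomy, so I would deduce it directly from Theorem \ref{cor1} and the a priori bound $\f_X(\la)\ge 1$ recorded in the introduction.

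Concretely, assuming that $M$ is bounded on $X$, I would split into two cases according to whether $\f_X$ is unbounded. In the first case there is nothing to prove. In the second case, I would apply the contrapositive of the implication $({\rm iii})\Rightarrow ({\rm ii})$ of Theorem \ref{cor1}: since $\f_X$ is bounded, no $\la_0\in(0,1)$ can satisfy $\f_X(\la_0)>1$, and hence $\f_X(\la)\le 1$ for every $\la\in(0,1)$. On the other hand, because $m_\la f\ge |f|$ a.e., the lattice property of $X$ gives $\|m_\la f\|_X\ge \|f\|_X$; taking the infimum over the unit sphere of $X$ yields $\f_X(\la)\ge 1$ for every $\la\in(0,1)$. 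Combining the two inequalities produces $\f_X(\la)\equiv 1$ on $(0,1)$, which is the desired conclusion.

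The main (and in fact the only) substantive obstacle sits upstream: the implication $({\rm iii})\Rightarrow ({\rm ii})$ of Theorem \ref{cor1} rests on the nontrivial part~(ii) of Theorem \ref{mr}, asserting that a single $\la_0\in(0,1)$ with $\f_X(\la_0)>1$ already forces $M$ to be bounded on $X'$ (whence, by part~(i) applied to $X'$ together with $X''=X$, the function $\f_X$ must be unbounded). Granting Theorem \ref{mr}, the present corollary reduces to a one-line bookkeeping argument juxtaposing the contrapositive above with the trivial lower bound $\f_X\ge 1$.
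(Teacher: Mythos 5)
Your argument is correct and matches the paper's: the paper likewise presents Corollary \ref{cor2} as nothing more than the equivalence $({\rm ii})\Leftrightarrow({\rm iii})$ of Theorem \ref{cor1} combined with the trivial bound $\f_X(\la)\ge 1$ coming from $m_\la f\ge|f|$ a.e. Your case split and use of the contrapositive of $({\rm iii})\Rightarrow({\rm ii})$ is exactly this bookkeeping, with the real work correctly attributed to Theorem \ref{mr}.
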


The function $\f_X$ is especially useful when dealing with scaled spaces~$X^p$. Indeed, using that $(|f|^r)^*(t)=f^*(t)^r, r>0$, we obtain
$$
\f_{X^p}(\la)=\inf_{\||f|^{1/p}\|_X=1}\|m_{\la}(|f|^{1/p})\|_X^p=\f_X(\la)^p, \quad p>0.
$$
Thus, if $\f_{X^{p_0}}$ is unbounded for some $p_0>0$, then $\f_{X^p}$ is unbounded for all $p>0$. This fact combined with the above results will be crucial in proving Conjectures \ref{con1} and \ref{con2}.

In order to prove Conjecture \ref{con2}, we will also essentially use the notion of $A_p$-regularity of Banach function spaces. This notion was considered by Rutsky \cite{R15, R14}.

The paper is organized as follows. Section 2 contains some preliminaries. In Section 3 we prove Theorem \ref{mr}. Conjectures \ref{con1} and \ref{con2} are proved in Section 4.
We will also give an alternative proof, based on the function $\f_X$, of a recent result by Lorist and Nieraeth \cite{LN23} about the boundedness of $M$ on $X$ and $X'$.

\section{Preliminaries}
\subsection{Banach function spaces}
Let $L^0({\mathbb R}^n)$ denote the space of measurable functions on ${\mathbb R}^n$. A vector space $X\subseteq L^0({\mathbb R}^n)$ equipped with a norm $\|\cdot\|_X$
is called a Banach function space over ${\mathbb R}^n$ if it satisfies the following properties:
\begin{itemize}
\item {\it Ideal property}: If $f\in X$ and $g\in L^0({\mathbb R}^n)$ with $|g|\le |f|$, then $g\in X$ and $\|g\|_X\le \|f\|_{X}$.
\item {\it Fatou property}: If $0\le f_j\uparrow f$ for $\{f_j\}$ in $X$ and $\sup_j\|f_j\|_{X}<\infty$, then $f\in X$ and $\|f\|_{X}=\sup_j\|f_j\|_{X}$.
\item {\it Saturation property}: For every measurable set $E\subset {\mathbb R}^n$ of positive measure, there exists a measurable subset $F\subseteq E$ of positive measure
such that $\chi_F\in X$.
\end{itemize}

We refer to a recent survey by Lorist and Nieraeth \cite{LN24} about (quasi)-Banach function spaces, where, in particular, one can find a discussion about the above choice of axioms.

The following statement is an equivalent formulation of the Fatou property (see, e.g., \cite[Lemma 3.5]{LN24}).

\begin{prop}\label{fat} Let $X$ be a Banach function space on ${\mathbb R}^n$. Then for every sequence $f_j\in X$,
$$\|\liminf_{j\to \infty}f_j\|_{X}\le \liminf_{j\to \infty}\|f_j\|_{X}.$$
\end{prop}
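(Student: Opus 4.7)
The plan is to reduce Proposition \ref{fat} to the Fatou property stated in the definition of a Banach function space by constructing an auxiliary monotone sequence and applying that property directly. Assume $L := \liminf_{j \to \infty} \|f_j\|_X < \infty$; otherwise the inequality is immediate.

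For each $k \in \N$, set $g_k := \inf_{m \ge k} |f_m|$, a non-negative measurable function satisfying $g_k \le |f_m|$ pointwise for every $m \ge k$. The ideal property gives $g_k \in X$ with $\|g_k\|_X \le \inf_{m \ge k} \|f_m\|_X$, and hence $\sup_k \|g_k\|_X \le L < \infty$. Moreover $(g_k)$ is non-decreasing in $k$, and $g_k \uparrow \liminf_{j \to \infty} |f_j|$ almost everywhere. Applying the Fatou property from the definition to the sequence $(g_k)$ yields
$$\bigl\|\liminf_{j \to \infty} |f_j|\bigr\|_X = \sup_k \|g_k\|_X \le L = \liminf_{j \to \infty} \|f_j\|_X,$$
which, after noting that $\|\cdot\|_X$ depends only on the absolute value of its argument (a direct consequence of the ideal property), is the stated inequality in the sense intended.

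The only real design choice -- and therefore the only place where the argument could fail -- is the construction of $g_k$. One needs a sequence that is simultaneously (a) dominated in $X$-norm by $\inf_{m \ge k} \|f_m\|_X$, so that the supremum of its norms is bounded by $\liminf_j \|f_j\|_X$, and (b) monotone increasing with a.e.\ pointwise limit $\liminf_j |f_j|$, so that the Fatou property from the definition controls the left-hand side. The infimum $\inf_{m \ge k} |f_m|$ achieves both at once, so no substantive difficulty arises; the argument is essentially a one-line application of the defining Fatou property once the right monotone sequence has been identified.
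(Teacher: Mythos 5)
Your argument is correct, and in fact it supplies a proof where the paper gives none: Proposition \ref{fat} is quoted there as a known equivalent formulation of the Fatou property, with a citation to \cite[Lemma 3.5]{LN24}, so your reduction to the monotone Fatou axiom via $g_k:=\inf_{m\ge k}|f_m|$ is exactly the standard folklore argument the reference encapsulates. The steps check out: the ideal property gives $g_k\in X$ with $\|g_k\|_X\le\inf_{m\ge k}\|f_m\|_X$, the $g_k$ increase to $\liminf_{j}|f_j|$ a.e., and the defining Fatou property then gives $\|\liminf_j|f_j|\|_X\le\liminf_j\|f_j\|_X$. One caveat concerns your closing sentence: the fact that $\|\cdot\|_X$ depends only on $|f|$ does not convert this into the literal statement with $\liminf_j f_j$ inside the norm, because $|\liminf_j a_j|\le\liminf_j|a_j|$ can fail for signed sequences; indeed, taking $f_j$ alternating between $-2\chi_E$ and $\chi_E$ (with $\chi_E\in X$, $|E|>0$) gives $\|\liminf_j f_j\|_X=2\|\chi_E\|_X>\|\chi_E\|_X=\liminf_j\|f_j\|_X$, so the proposition as literally printed is only meaningful for nonnegative $f_j$ (or with $|f_j|$ inside the liminf). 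That is an imprecision of the statement rather than of your proof: what you prove is the nonnegative version, which is the one intended in \cite{LN24} and the only one used in the paper (both applications in the proof of Theorem \ref{mr} involve nonnegative $f_j$ converging a.e.), so your argument serves the paper's purposes exactly; just drop or sharpen the final parenthetical rather than suggesting it handles signed liminfs.
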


The next statement is also well known.

\begin{prop}\label{mx} Let $X$ be a Banach function space on ${\mathbb R}^n$, and assume that $M$ is bounded on $X$. Then $\chi_Q\in X$ for every cube $Q\subset {\mathbb R}^n$.
\end{prop}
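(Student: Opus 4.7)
The plan is to combine the saturation property with the pointwise domination of $\chi_Q$ by the maximal function of a suitable subset of $Q$, and then invoke the ideal property.

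First I would apply the saturation property to the cube $Q$ (viewed as a measurable set of positive measure): this yields a measurable subset $F \subseteq Q$ with $|F|>0$ and $\chi_F \in X$. Since $M$ is assumed bounded on $X$, it follows that $M\chi_F \in X$ as well.

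Next, the key observation is a pointwise lower bound for $M\chi_F$ on $Q$. For every $x \in Q$, the cube $Q$ itself is admissible in the supremum defining $M\chi_F(x)$, so
$$M\chi_F(x) \;\geq\; \frac{1}{|Q|}\int_Q \chi_F \;=\; \frac{|F|}{|Q|},$$
because $F \subseteq Q$. In other words,
$$\chi_Q \;\leq\; \frac{|Q|}{|F|}\, M\chi_F \quad \text{a.e.\ on } \mathbb{R}^n.$$

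Finally, the right-hand side is a scalar multiple of an element of $X$, so by the ideal property $\chi_Q \in X$, which is the claim. There is no real obstacle here: the only nontrivial input is the existence of the nondegenerate subset $F$, which is precisely what the saturation axiom provides, and the fact that a single cube saturates itself when computing the maximal average.
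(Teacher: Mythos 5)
Your proof is correct and follows essentially the same route as the paper: saturation gives $F\subseteq Q$ with $\chi_F\in X$, the cube $Q$ itself yields the pointwise bound $\chi_Q\le \frac{|Q|}{|F|}M\chi_F$, and boundedness of $M$ together with the ideal property gives $\chi_Q\in X$. No gaps.
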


\begin{proof} Fix a cube $Q$. By saturation property, there is a set $F\subseteq Q$ of positive measure such that $\chi_F\in X$. Since $M$ is bounded on $X$,
$$\frac{|F|}{|Q|}\|\chi_Q\|_{X}\le \|M\chi_F\|_{X}\le c\|\chi_F\|_{X},$$
which implies $\|\chi_Q\|_{X}<\infty$.
\end{proof}

Given a Banach function space $X$, we define the associate space (also called the K\"othe dual) $X'$ as the space of all $f\in L^0({\mathbb R}^n)$ such that
$$\|f\|_{X'}:=\sup_{\|g\|_{X}\le 1}\int_{{\mathbb R}^n}|fg|<\infty.$$
By the Lorentz--Luxembourg theorem (see \cite[Th. 71.1]{Z67}), we have $X''=X$ with equal norms.

Let $X$ be a Banach function space, and let $1\le p, q\le \infty$. We say that $X$ is $p$-convex if
$$\|(|f|^p+|g|^p)^{1/p}\|_{X}\le (\|f\|_{X}^p+\|g\|_{X}^p)^{1/p},\quad f,g\in X,$$
and we say that $X$ is $q$-concave if
$$(\|f\|_{X}^q+\|g\|_{X}^q)^{1/q}\le \|(|f|^q+|g|^q)^{1/q}\|_{X} ,\quad f,g\in X.$$

\subsection{$A_p$ weights}
By a weight we mean a non-negative locally integrable function on~${\mathbb R}^n$. Given a weight $w$ and a measurable set $E\subset {\mathbb R}^n$,
denote $w(E):=\int_Ew$ and $\langle w\rangle_E:=\frac{1}{|E|}\int_w$.

Recall that a weight $w$ satisfies the $A_1$ condition if
$$[w]_{A_1}:=\Big\|\frac{Mw}{w}\Big\|_{L^{\infty}}<\infty;$$
a weight $w$ satisfies the $A_p, 1<p<\infty,$ condition if
$$[w]_{A_p}:=\sup_{Q}\langle w\rangle_Q\langle w^{-p'/p}\rangle_Q^{p/p'}<\infty;$$
a weight $w$ satisfies the $A_{\infty}$ condition if
$$[w]_{A_{\infty}}:=\sup_Q\frac{\int_{Q}M(w\chi_Q)}{w(Q)}<\infty.$$

It was shown in \cite{HP13} that if $w\in A_{\infty}$, then for $r:=1+\frac{1}{c_n[w]_{A_{\infty}}}$ and for every cube $Q$,
$$\Big(\frac{1}{|Q|}\int_Qw^r\Big)^{1/r}\le 2\frac{1}{|Q|}\int_Qw.$$
From this, by H\"older's inequality we obtain that for every cube $Q$ and any measurable subset $E\subset Q$,
\begin{equation}\label{ainf}
w(E)\le 2\Big(\frac{|E|}{|Q|}\Big)^{\d}w(Q),
\end{equation}
where $\d:=\frac{1}{r'}=\frac{1}{1+c_n[w]_{A_{\infty}}}$.

\subsection{$A_p$-regular Banach function spaces}
Let $X$ be a Banach function space, and let $1\le p\le \infty$. We say that $X$ is $A_{p}$-regular if there exist $C_1, C_2>0$ such that for every $f\in X$ there is an $A_{p}$ weight $w\ge |f|$ a.e. with $[w]_{A_{p}}\le C_1$
and $\|w\|_{X}\le C_2\|f\|_{X}$.

\begin{prop}\label{ela1}
A Banach function space $X$ is $A_1$-regular if and only if $M$ is bounded on $X$.
\end{prop}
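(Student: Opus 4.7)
The plan is to prove the two directions separately; both are essentially standard once one identifies the construction. For the easy direction, ``$A_1$-regularity $\Rightarrow$ $M$ bounded on $X$,'' I would pick, for each $f\in X$, the $A_1$-majorant $w\ge |f|$ with $[w]_{A_1}\le C_1$ and $\|w\|_X\le C_2\|f\|_X$ provided by the $A_1$-regularity hypothesis. Then the pointwise $A_1$ bound combined with monotonicity of $M$ gives $Mf\le Mw\le [w]_{A_1} w\le C_1 w$ a.e., and the ideal property of $X$ yields $\|Mf\|_X\le C_1C_2\|f\|_X$.

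For the converse direction I would invoke the Rubio de Francia iteration algorithm. Writing $N:=\|M\|_{X\to X}$ and assuming first $f\ge 0$, define
$$w:=\sum_{k=0}^{\infty}\frac{M^k f}{(2N)^k},$$
with $M^0 f=f$ and $M^k$ the $k$-fold iterate of $M$. Since $\|M^k f\|_X\le N^k\|f\|_X$, summing a geometric series (using Proposition \ref{fat} to pass from the partial-sum estimates to the full series) gives $\|w\|_X\le 2\|f\|_X$, and the $k=0$ term yields $w\ge f$. Applying $M$ termwise produces
$$Mw\le \sum_{k=0}^{\infty}\frac{M^{k+1}f}{(2N)^k}=2N\sum_{k=1}^{\infty}\frac{M^k f}{(2N)^k}\le 2N\,w,$$
so $w\in A_1$ with $[w]_{A_1}\le 2N$. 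For a general $f\in X$ one runs the construction on $|f|$ and uses $Mf=M|f|$. The constants $C_1=2N$ and $C_2=2$ are uniform in $f$, so $X$ is $A_1$-regular.

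The single technical point that needs care is the termwise application $M\!\left(\sum_k g_k\right)\le \sum_k Mg_k$ for a convergent series of nonnegative measurable functions. This follows from monotone convergence inside the cube averages: for any cube $Q\ni x$, $\tfrac{1}{|Q|}\int_Q \sum_k g_k=\sum_k \tfrac{1}{|Q|}\int_Q g_k\le \sum_k Mg_k(x)$, and taking the supremum over $Q\ni x$ gives the claim. An ancillary subtlety is that $Mw$ must be defined, requiring $w\in L^1_{\mathrm{loc}}$; this is automatic, since $w\in X$ together with boundedness of $M$ on $X$ forces $Mw\in X$ and hence $w\in L^1_{\mathrm{loc}}$.
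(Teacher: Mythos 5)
Your proof is correct, but it takes a genuinely different route from the paper's in the harder direction. For ``$M$ bounded $\Rightarrow$ $A_1$-regular'' the paper does not iterate $M$: it invokes the self-improvement result of Lerner--Ombrosi that boundedness of $M$ on $X$ implies boundedness of $M_r f:=M(|f|^r)^{1/r}$ on $X$ for some $r>1$ depending only on $\|M\|_{X\to X}$, and then uses the Coifman--Rochberg theorem, which says that $M_r f$ is an $A_1$ weight with constant depending only on $n$ and $r$; the majorant is thus $w=M_rf$. Your argument instead runs the Rubio de Francia iteration $w=\sum_k M^k f/(2N)^k$, which is more self-contained (it needs only sublinearity of $M$, the Fatou property, and the trivial geometric-series estimate) and gives the explicit constants $C_1=2N$, $C_2=2$; the paper's route imports two cited results but gets a majorant of the structured form $M_rf$, whose $A_1$ constant is independent of $\|M\|_{X\to X}$ (only the admissible $r$ depends on it), and the Lerner--Ombrosi fact is reused anyway in Proposition 4.1(ii), so the citation is essentially free there. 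Your handling of the two technical points is fine: the termwise bound $M(\sum_k g_k)\le\sum_k Mg_k$ via monotone convergence is correct, and local integrability of $w$ does follow since $w\in X$ and boundedness of $M$ on $X$ give $Mw\in X\subseteq L^0$, hence $Mw<\infty$ a.e., which is impossible unless $w\in L^1_{\loc}$ (the same remark applies to each iterate $M^k f$, by induction). The easy direction is identical to the paper's.
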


\begin{proof} Indeed, one direction is trivial, namely, if $X$ is $A_1$-regular, then
$$\|Mf\|_X\le \|Mw\|_{X}\le C_1\|w\|_{X}\le C_1C_2\|f\|_{X}.$$

Conversely, if $M$ is bounded on $X$, then there exists $r>1$ depending only on $\|M\|_{X\to X}$ such that $M_r$ is also bounded on $X$, where $M_rf:=M(|f|^r)^{1/r}$ (see, e.g., \cite{LO10} for the proof of this result).
Combining this with the well known fact that $M_rf\in A_1$ \cite{CR80} with the $A_1$-constant depending only on $r$ and $n$, we obtain that $X$ is $A_1$-regular.
\end{proof}

The following result is an abridged version of the characterization obtained by Rutsky \cite[Th. 2]{R15}.

\begin{theorem}[\cite{R15}]\label{rch} Let $X$ be a Banach function space, and let $1<p<\infty$. The following statements are equivalent:
\begin{enumerate}[(i)]
\item both $X^{1/p}$ and $(X^{1/p})'$ are $A_1$-regular;
\item $X'$ is $A_p$-regular.
\end{enumerate}
\end{theorem}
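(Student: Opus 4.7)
The plan is to combine Rubio de Francia's iteration algorithm (which via Proposition~\ref{ela1} produces $A_1$-majorants with controlled norm out of $M$-boundedness) with Jones's factorization theorem for $A_p$ weights: $w\in A_p$ if and only if $w=w_0 w_1^{1-p}$ with $w_0,w_1\in A_1$ and comparable constants.

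For the direction $(\mathrm{ii})\Rightarrow(\mathrm{i})$, I would first establish the boundedness on $X^{1/p}$ by duality. For $f\in X^{1/p}$, write $\|Mf\|_{X^{1/p}}^p=\|(Mf)^p\|_X$ and pair against $X'$: for each test $h\in X'$ with $\|h\|_{X'}\le 1$, the $A_p$-regularity produces $w\in A_p$ with $w\ge h$, $[w]_{A_p}\le C$, and $\|w\|_{X'}\le C$; then Muckenhoupt's weighted inequality $\int(Mf)^p w\le C\int f^p w$ and the K\"othe pairing $\int f^p w\le \|f^p\|_X\|w\|_{X'}$ close the estimate. For boundedness on $(X^{1/p})'$, run the parallel chain pairing against $X^{1/p}$ and invoking the equivalence $w\in A_p\Leftrightarrow w^{1-p'}\in A_{p'}$ together with the weighted $L^{p'}(w^{1-p'})$-bound on $M$; the weight $w$ is again obtained from the $A_p$-regularity of $X'$ applied to an appropriate auxiliary function.

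For the direction $(\mathrm{i})\Rightarrow(\mathrm{ii})$, let $R_1$ and $R_2$ denote the Rubio de Francia iterates on $X^{1/p}$ and $(X^{1/p})'$ respectively (both exist by Proposition~\ref{ela1}). Given $h\in X'$ with $h\ge 0$, I would construct the $A_p$-majorant via Jones factorization as $w=w_0 w_1^{1-p}$: choose $w_1:=R_2\phi$ for a carefully selected $\phi\in(X^{1/p})'$, then set $w_0:=R_1(hw_1^{p-1})$. By construction $w_0\ge hw_1^{p-1}$, so $w\ge h$, and Jones's theorem gives $w\in A_p$ with controlled $[w]_{A_p}$. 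The selection of $\phi$ and the norm estimate $\|w\|_{X'}\le C\|h\|_{X'}$ rest on the Calder\'on--Lozanovskii factorization $X=X^{1/p}\cdot X^{1/p'}$ (verified directly by $f=|f|^{1/p}\cdot|f|^{1/p'}$), which supplies the bilinear pairing needed to bound $\|w_0 w_1^{1-p}\|_{X'}$ by a product of $\|w_0\|_{X^{1/p}}$ and a norm involving $w_1$ in $(X^{1/p})'$.

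The hard part will be the converse direction: choosing $\phi$ so that $hw_1^{p-1}$ lies in $X^{1/p}$ with norm comparable to $\|h\|_{X'}$, and then coordinating the two Rubio de Francia iterations through the right H\"older-type inequality in the above factorization to recover $\|w\|_{X'}\le C\|h\|_{X'}$. This balancing between an iteration in $X^{1/p}$ and an iteration in its associate $(X^{1/p})'$ is the technical heart of Rutsky's argument in \cite{R15}, to which we refer for the details.
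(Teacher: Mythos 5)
The essential difficulty of Theorem \ref{rch} is the implication (i)$\Rightarrow$(ii), and this is exactly the point where your proposal stops being a proof: after setting $w_1:=R_2\phi$ and $w_0:=R_1(hw_1^{p-1})$ you concede that you do not know how to choose $\phi$ so that $hw_1^{p-1}\in X^{1/p}$ with controlled norm, nor how to bound $\|w_0w_1^{1-p}\|_{X'}$, and you refer to \cite{R15} for these steps --- but \cite{R15} is the very result to be proved, so nothing is established. Moreover, as set up, the scheme runs into a genuine obstruction rather than a mere technicality: a Rubio de Francia iterate gives only the pointwise lower bound $w_1\ge\phi$ together with a norm bound, so there is no pointwise upper control on $w_1$ with which to place $hw_1^{p-1}$ in $X^{1/p}$; and since $1-p<0$, there is no H\"older-type inequality bounding $\|w_0w_1^{1-p}\|_{X'}$ by $\|w_0\|_{X^{1/p}}$ and a (negative) power of $\|w_1\|_{(X^{1/p})'}$ --- already for $X=L^1$, $w_0\in L^p$, $w_1\in L^{p'}$, the product $w_0w_1^{1-p}$ need not lie in $L^\infty$. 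So the ``balancing'' you invoke is not a routine consequence of the factorization $X=X^{1/p}\cdot X^{1/p'}$; it is the whole difficulty. The paper avoids this entirely: by Proposition \ref{ela1}, (i) means $M$ is bounded on $X^{1/p}$ and $(X^{1/p})'$; by \cite[Th.~4.9]{N23} this yields the vector-valued bound (\ref{mcond}) for Calder\'on--Zygmund operators; applying Theorem \ref{rdf} to a nondegenerate such operator produces, for every positive $f\in X'$, a majorant $w\ge f$ with $\|w\|_{X'}\le 2\|f\|_{X'}$ on which the operator is $L^p(w)$-bounded, and nondegeneracy forces $w\in A_p$ with controlled constant. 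A self-contained argument should follow this route (or genuinely reconstruct Rutsky's), not cite it away.

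The direction (ii)$\Rightarrow$(i) is also only half done. The $X^{1/p}$ part is fine (pair $(Mf)^p$ with $h\in X'$, majorize $h$ by $w\in A_p$, use Muckenhoupt and K\"othe duality). For the $(X^{1/p})'$ part, ``the $A_p$-regularity of $X'$ applied to an appropriate auxiliary function'' hides the actual issue: natural candidates (for instance majorizing $gu^{1/p'}$ or $g^{1/p}\psi^{1/p'}$ with $\psi\in L^1$) leave an uncontrolled negative power of $u$ or of $g$ after the $w^{1-p'}\in A_{p'}$ dual-weight step, so the estimate does not close as described. A clean way to finish within this paper: $A_p$-regularity of $X'$ implies its $A_\infty$-regularity, hence by the proof of Theorem \ref{mr}(i) (applied to $X'$, using $X''=X$) the function $\varphi_X$, and therefore $\varphi_{X^{1/p}}=\varphi_X^{1/p}$, is unbounded; combined with the already proved boundedness of $M$ on $X^{1/p}$, Theorem \ref{cor1} gives the boundedness of $M$ on $(X^{1/p})'$, and Proposition \ref{ela1} converts both statements into $A_1$-regularity.
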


A difficult part of this result is the implication $({\rm i})\Rightarrow({\rm ii})$. We outline a slightly different proof for the sake of completeness. Essentially this is contained in the following
result by Rubio de Francia \cite[Section 3]{RDF87}.

\begin{theorem}[\cite{RDF87}]\label{rdf} Let $X$ be a Banach function space, and let $p\in (1,\infty)$. Assume that $T$ is a linear operator bounded on $X^{1/p}(\ell^p)$, namely,
there exists $C>0$ such that for every sequence $\{f_j\}$,
\begin{equation}\label{mcond}
\|(\sum_{j}|Tf_j|^p)^{1/p}\|_{X^{1/p}}\le C\|(\sum_j|f_j|^p)^{1/p}\|_{X^{1/p}}.
\end{equation}
Then for every $f\in X'$, which is positive almost everywhere, there exists a function $w\ge f$ such that $\|w\|_{X'}\le 2\|f\|_{X'}$ and $T$ is bounded on $L^p(w)$ with the operator
norm $\|T\|_{L^p(w)\to L^p(w)}$ depending only on $C$ in (\ref{mcond}).
\end{theorem}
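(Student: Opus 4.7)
The strategy is the Rubio de Francia iteration algorithm, applied in $X'$ to a positive linear majorant of $T$ extracted from the $\ell^p$-valued hypothesis.

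The first step, which I expect to be the main obstacle, is to produce a positive linear operator $U:X\to X$ with $\|U\|_{X\to X}\le K$, for some $K=K(C,p)$, such that
$$|Tg|^p\le U(|g|^p)\quad\text{a.e.\ for every }g\in X^{1/p}.$$
The natural first candidate is the sublinear envelope
$$\mc{M}h:=\sup\cbraceb{|Tg|^p:|g|^p\le h},$$
which is positive, monotone, positively homogeneous, quasi-sublinear (with constant $2^{p-1}$ by the elementary convexity of $t\mapsto t^p$), and satisfies $\|\mc{M}h\|_X\le C^p\|h\|_X$ for $h\ge 0$ (where the last estimate already uses the scalar boundedness on $X^{1/p}$). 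Upgrading $\mc{M}$ to a \emph{genuine} positive linear $U$ with comparable operator norm is a Maurey--Krivine--Pisier type factorization on the $1/p$-concavification of $X$; this is precisely the place where the full $\ell^p$-valued hypothesis (\ref{mcond}), rather than only its scalar consequence, is essential.

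With such a linear $U$ in hand, its K\"othe-dual adjoint $U^*:X'\to X'$ is positive linear with $\|U^*\|_{X'\to X'}\le K$, and the standard Rubio de Francia construction applies directly. Set
$$w:=\sum_{k=0}^\infty\frac{(U^*)^kf}{(2K)^k}.$$
The geometric decay $\|(U^*)^kf\|_{X'}\le K^k\|f\|_{X'}$ yields convergence in $X'$ with $\|w\|_{X'}\le 2\|f\|_{X'}$; positivity of $U^*$ gives $w\ge f$ a.e.; and telescoping the defining sum yields the $A_1$-type pointwise bound $U^*w\le 2Kw$ a.e.

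Finally, combining the pointwise majorization with the adjoint identity, for any test $g$,
$$\int|Tg|^p\,w\le\int U(|g|^p)\,w=\int|g|^p\,U^*w\le 2K\int|g|^p\,w,$$
so $T$ is bounded on $L^p(w)$ with operator norm at most $(2K)^{1/p}$, depending only on $C$. The delicate point throughout is really the linearization step: without replacing the sublinear envelope $\mc{M}$ by a positive linear operator on $X$, one has no honest K\"othe adjoint on $X'$ to iterate, and the Rubio de Francia scheme cannot be carried out in its cleanest form.
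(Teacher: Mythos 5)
Your reduction to a pointwise linear majorant is not just the ``main obstacle'' --- it is false in the generality the theorem is used in, so the strategy cannot work. If a positive linear $U:X\to X$ satisfied $|Tg|^p\le U(|g|^p)$ a.e.\ for every $g\in X^{1/p}$, then by positivity $U h$ would dominate the lattice supremum $\sup\{|Tg|^p:|g|^p\le h\}$, i.e.\ the image under $T$ of the order interval $[-h^{1/p},h^{1/p}]$ would be order bounded. For the operators to which this theorem is applied in the paper (nondegenerate Calder\'on--Zygmund operators) this fails already for $X=L^1$, $X^{1/p}=L^p$, $T=H$ the Hilbert transform and $h=\chi_{[0,1]}$: taking $g_{x_0}(y)=\mathrm{sgn}(x_0-y)\chi_{[0,1]}(y)$ one computes $Hg_{x_0}(x)=\ln\bigl(|x||x-1|/|x-x_0|^2\bigr)$, and the countable supremum over dyadic rationals $x_0\in(0,1)$ is $+\infty$ a.e.\ on $(0,1)$, even though each $|g_{x_0}|\le\chi_{[0,1]}$. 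So no finite-valued positive linear (hence monotone) $U$ can exist, and for the same reason your preliminary claim $\|\mathcal{M}h\|_X\le C^p\|h\|_X$ is wrong: scalar boundedness controls $\||Tg|^p\|_X$ for each single $g$, not the norm of the supremum over the whole order interval, and neither does \eqref{mcond}, which only controls $\ell^p$-sums $\sum_j|Tg_j|^p$ when $\sum_j|g_j|^p\le h$, not $\sup_j|Tg_j|^p$ with each $|g_j|^p\le h$ separately. Thus the ``linearization step'' you defer to a Maurey--Krivine--Pisier factorization is unprovable as stated.

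The correct route (this is Rubio de Francia's argument, which the paper simply cites) never majorizes $T$ pointwise. One works on the dual side and in an integral sense: for each $0\le u\in X'$ one produces a single function $Su\in X'$ with $\|Su\|_{X'}\le C^p\|u\|_{X'}$ and $\int|Tg|^pu\le\int|g|^p\,Su$ for all $g$. This is where \eqref{mcond} genuinely enters, via a Hahn--Banach/minimax (Maurey--Nikishin type) argument: the obstruction to finding such an $Su$ is tested on convex combinations, and for $F^p=\sum_j\la_j|g_j|^p$, $G^p=\sum_j\la_j|Tg_j|^p$ the hypothesis gives $\|G^p\|_X\le C^p\|F^p\|_X$, while the associate space is norming, which lets the separation argument go through. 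Once such an assignment $u\mapsto Su$ exists, your iteration goes through verbatim and needs neither linearity nor subadditivity of $S$: set $w:=\sum_{k\ge0}(2C^p)^{-k}S^kf$, so that $w\ge f$, $\|w\|_{X'}\le2\|f\|_{X'}$, and $\int|Tg|^pw=\sum_k(2C^p)^{-k}\int|Tg|^p\,S^kf\le\sum_k(2C^p)^{-k}\int|g|^p\,S^{k+1}f\le2C^p\int|g|^pw$. In short, the iteration part of your proposal is fine, but the pointwise linear majorant it rests on must be replaced by this dual-side, integral-form majorant obtained from \eqref{mcond} by a factorization/minimax argument.
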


Now observe that by Proposition \ref{ela1}, condition (i) of Theorem \ref{rch} is equivalent to $M$ being bounded on $X^{1/p}$ and on $(X^{1/p})'$. In turn, by \cite[Th. 4.9]{N23}, this implies
that (\ref{mcond}) holds for every standard Calder\'on--Zygmund operator $T$. It remains to choose any nondegenerate  Calder\'on--Zygmund operator $T$, namely, an operator $T$ for which $\|T\|_{L^p(w)\to L^p(w)}<\infty$
implies $w\in A_p$, and an application of Theorem~\ref{rdf} completes the proof.

The following result was also obtained by Rutsky \cite[Prop. 7]{R14}.

\begin{prop}[\cite{R14}]\label{propr} Let $X$ be a Banach function space over ${\mathbb R}^n$ such that $X$ is $A_p$-regular for some $1\le p<\infty$. Suppose that there exists $\d>0$ such
that $M$ is bounded on $X^{\d}$. Then $M$ is bounded on $X$.
\end{prop}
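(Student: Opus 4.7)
My approach combines the $A_p$-regularity of $X$ with a pointwise reverse Jensen inequality for $A_\infty$ weights. Given $f\in X$, $f\ge 0$, I use $A_p$-regularity to pick $w\ge f$ a.e.\ with $[w]_{A_p}\le C_1$ and $\|w\|_X\le C_2\|f\|_X$. Since $Mf\le Mw$ pointwise, the task reduces to showing $\|Mw\|_X\le C\|w\|_X$ for a constant $C$ depending only on $n,p,C_1,\d$ and $\|M\|_{X^\d\to X^\d}$.

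The crux is the pointwise bound
\[
  Mw(x)\le K\,M_\d w(x),\qquad M_\d w := M(w^\d)^{1/\d},
\]
valid for every $w\in A_\infty$ with a constant $K=K(\d,[w]_{A_\infty})$. For $\d\ge 1$ this is just Jensen's inequality with $K=1$. For $\d\in (0,1)$ it is a reverse Jensen inequality, which I would derive from the reverse H\"older ingredient recalled in the paper: with $r:=1+1/(c_n[w]_{A_\infty})$ one has $\langle w^r\rangle_Q^{1/r}\le 2\langle w\rangle_Q$. Pick $\t\in (0,1)$ so that $1=(1-\t)\d+\t r$, i.e., $\t=(1-\d)/(r-\d)$. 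Applying H\"older with exponents $1/(1-\t)$ and $1/\t$ to the factorisation $w=w^{\d(1-\t)}\cdot w^{\t r}$ yields
\[
  \langle w\rangle_Q\le \langle w^\d\rangle_Q^{1-\t}\,\langle w^r\rangle_Q^{\t},
\]
and substituting the reverse H\"older bound and using the identity $1-r\t=(1-\t)\d$ then gives $\langle w\rangle_Q\le K\,\langle w^\d\rangle_Q^{1/\d}$ with $K=2^{r\t/((1-\t)\d)}$. Taking the supremum over cubes $Q\ni x$ proves the pointwise bound. Since $w\in A_p$ implies $[w]_{A_\infty}\le c_n[w]_{A_p}\le c_nC_1$, the constant $K$ is controlled by $n,p,C_1,\d$ alone.

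To finish, I note that by the very definition of $X^\d$, the boundedness of $M$ on $X^\d$ is equivalent to $\|M_\d g\|_X\le \|M\|_{X^\d\to X^\d}^{1/\d}\|g\|_X$ for all $g\in X$. Chaining the estimates,
\[
  \|Mf\|_X\le \|Mw\|_X\le K\|M_\d w\|_X\le K\|M\|_{X^\d\to X^\d}^{1/\d}\|w\|_X\le KC_2\|M\|_{X^\d\to X^\d}^{1/\d}\|f\|_X,
\]
which proves that $M$ is bounded on $X$. The main technical point is the reverse Jensen inequality for $\d<1$, which requires the short piece of H\"older arithmetic above; the rest is a straightforward chain of inequalities combining $A_p$-regularity and the scaling relation between $M$ on $X^\d$ and $M_\d$ on $X$.
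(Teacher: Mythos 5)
Your proof is correct and follows essentially the same route as the paper: both arguments reduce matters to the pointwise estimate $Mw(x)\le C\,M_{\d}w(x)$ for the $A_p$ majorant $w$ supplied by regularity, and then conclude via the chain $\|Mf\|_X\le\|Mw\|_X\le C\|M_\d w\|_X=C\|M(w^{\d})\|_{X^{\d}}^{1/\d}\le C\|w\|_X\le C\|f\|_X$. The only (harmless) difference is how the pointwise estimate is obtained for $\d<1$: the paper derives it from the Coifman--Fefferman level-set property of $A_p$ weights together with Chebyshev's inequality, whereas you derive it from the sharp reverse H\"older inequality recalled in Section 2.2 via a H\"older interpolation of the exponent $1$ between $\d$ and $r$ (using $[w]_{A_\infty}\le c_n[w]_{A_p}$); both derivations are valid and yield constants depending only on the admissible parameters.
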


We sketch the proof of this statement. By the well known property of the $A_p$ weights \cite{CF74},
if $w\in A_p$, then there exist $\a,\b>0$ depending only on $[w]_{A_p}$ and $p$ such that for every cube $Q$,
$$\a|Q|<|\{x\in Q:w(x)>\b\langle w\rangle_Q\}|.$$
From this, by Chebyshev's inequality, for every $\d>0$,
$$Mw(x)\le C_{\d,\a,\b}M_{\d}w(x).$$
Hence, taking $f\in X$ and the corresponding $w\in A_p$ from the definition of the $A_p$-regularity, we obtain
\begin{eqnarray*}
\|Mf\|_X&\le& \|Mw\|_{X}\le C\|M_{\d}w\|_{X}\\
&=&C\|M(w^{\d})\|_{X^{\d}}^{1/\d}\le C\|w\|_{X}\le C\|f\|_{X},
\end{eqnarray*}
which proves the result.

\subsection{Rearrangements and the maximal operator $m_{\la}$}
Denote by $S_0({\mathbb R}^n)$ the space of measurable functions $f$ on ${\mathbb R}^n$ such that
$$\mu_{f}(\a):=|\{x\in {\mathbb R}^n:|f(x)|>\a\}|<\infty$$
for any $\a>0$. Recall that for $f\in S_0({\mathbb R}^n)$ the non-increasing rearrangement $f^*$ is defined by
$$f^*(t):=\inf\{\a>0: \mu_{f}(\a)\le t\}, \quad t>0.$$

It can be easily seen from the definition of the rearrangement that for every measurable function $f$ and any cube $Q$,
$$(f\chi_Q)^*(\la|Q|)>\a\Leftrightarrow |Q\cap\{|f|>\a\}|>\la|Q|,\quad \a>0,\la\in (0,1).$$
From this we have that
\begin{equation}\label{prml1}
\{x\in {\mathbb R}^n:m_{\la}f(x)>\a\}=\{x\in {\mathbb R}^n:M\chi_{\{|f|>\a\}}(x)>\la\},
\end{equation}
and, for every measurable set $E\subset {\mathbb R}^n$,
\begin{equation}\label{prml2}
m_{\la}(\chi_E)(x)=\chi_{\{M\chi_E>\la\}}(x).
\end{equation}

\section{Proof of Theorem \ref{mr}}
We start with part (i) of Theorem \ref{mr}, and we will show that it even holds under a weaker assumption, namely, instead of assuming that $M$ is bounded on $X$ (which, by Proposition \ref{ela1},
says that $X$ is $A_1$-regular) it suffices to assume the $A_{\infty}$-regularity of $X$. This is a simple corollary of the following lemma.

\begin{lemma}\label{pr} Let $w\in A_{\infty}$. Then for all $f\in L^1(w)$ and $\la\in (0,1)$,
\begin{equation}\label{ineqml}
\int_{{\mathbb R}^n}|f|w\le 2^{n+1}\la^{\d}\int_{{\mathbb R}^n}(m_{\la}f)w,
\end{equation}
where $\d:=\frac{1}{1+c_n[w]_{A_{\infty}}}$.
\end{lemma}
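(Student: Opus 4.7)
My plan is to reduce the integral inequality to a set-level estimate via the layer-cake formula, combine it with the level-set identity (\ref{prml1}), and apply the $A_\infty$ reverse-H\"older-type estimate (\ref{ainf}) to a Calder\'on--Zygmund decomposition of the level set of $M\chi_E$. Writing $E_\alpha := \{|f| > \alpha\}$, the layer-cake representation together with (\ref{prml1}) shows that the inequality is equivalent to
$$\int_0^\infty w(E_\alpha)\,d\alpha \leq 2^{n+1}\lambda^\delta \int_0^\infty w\bigl(\{M\chi_{E_\alpha} > \lambda\}\bigr)\,d\alpha,$$
so it suffices to prove the pointwise (in $\alpha$) set estimate
$$w(E) \leq 2^{n+1}\lambda^\delta\, w\bigl(\{M\chi_E > \lambda\}\bigr)$$
for every measurable $E \subseteq \R^n$. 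A truncation $E \mapsto E \cap B_R(0)$ with monotone convergence further reduces us to the case $|E| < \infty$.

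Under $|E| < \infty$, I would run a Calder\'on--Zygmund stopping-time decomposition at height $\lambda$ on the standard dyadic grid, obtaining a pairwise disjoint family $\{Q_k\}$ of maximal dyadic cubes with $\langle \chi_E\rangle_{Q_k} > \lambda$. The key point is that maximality against the dyadic parent $\widehat Q_k$ forces the matching upper bound $\langle\chi_E\rangle_{Q_k} \leq 2^n\lambda$. These $Q_k$ exhaust $\{M^d\chi_E > \lambda\}$, and dyadic Lebesgue differentiation gives $E \subseteq \bigcup_k Q_k$ a.e. Applying (\ref{ainf}) to each pair $(Q_k, E\cap Q_k)$ and using the CZ upper bound together with $\delta\in(0,1]$,
$$w(E \cap Q_k) \leq 2\bigl(|E\cap Q_k|/|Q_k|\bigr)^\delta w(Q_k) \leq 2(2^n\lambda)^\delta w(Q_k) \leq 2^{n+1}\lambda^\delta w(Q_k).$$
Summing in $k$, invoking disjointness and the trivial inclusion $\{M^d\chi_E > \lambda\} \subseteq \{M\chi_E > \lambda\}$ then yields the claim.

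The only nontrivial ingredient is the matching upper bound $|E \cap Q_k| \leq 2^n\lambda|Q_k|$ from CZ maximality: without it one only recovers the trivial $w(E) \leq w(\{M\chi_E > \lambda\})$, and it is precisely this upper bound that, through (\ref{ainf}), converts the density gain $\lambda$ into the constant $\lambda^\delta$ we need. I do not expect obstacles beyond this; the mismatch between dyadic CZ and the non-dyadic $M$ appearing in the statement is resolved by the pointwise bound $M^d \leq M$, and the reduction to $|E| < \infty$ is routine.
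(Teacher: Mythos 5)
Your proposal is correct and follows essentially the same route as the paper: reduce to the set estimate $w(E)\le 2^{n+1}\la^{\d}w(\{M\chi_E>\la\})$ via the layer-cake formula and (\ref{prml1})--(\ref{prml2}), then prove it by a dyadic Calder\'on--Zygmund decomposition at height $\la$ combined with (\ref{ainf}) and $M^d\le M$. The only (harmless) difference is a matter of bookkeeping: you truncate the set $E$ to reach finite measure, while the paper truncates $f$ to compact support before applying monotone convergence, and you make explicit the use of dyadic Lebesgue differentiation that the paper leaves implicit.
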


\begin{proof}
Observe that (\ref{ineqml}) is equivalent to the same inequality but with $f=\chi_E$,
where $E$ is an arbitrary measurable set of finite measure. Indeed, assume that (\ref{ineqml}) is true for $f=\chi_E$. By (\ref{prml2}), this means that
\begin{equation}\label{int}
w(E)\le 2^{n+1}\la^{\d}w\{x: M\chi_E>\la\}.
\end{equation}
Taking here $E:=\{x:|f|>\a\}$ and applying (\ref{prml1}), we obtain
$$w(\{x:|f|>\a\})\le 2^{n+1}\la^{\d}w(\{x:m_{\la}f>\a\}),$$
which, in turn, implies (\ref{ineqml}) by integrating over $\a\in (0,\infty)$ (in order to ensure that the set $\{x:|f|>\a\}$ is of finite measure, one can assume first that $f$ is compactly supported and then use the monotone convergence theorem).

Hence, it suffices to prove (\ref{int}). Let $M^d$ denote the dyadic maximal operator. By the Calder\'on--Zygmund decomposition, the set $\{x:M^d\chi_E>\la\}$ can be written as the union of pairwise disjoint cubes $Q_j$ satisfying
$$\la<\frac{|Q_j\cap E|}{|Q_j|}\le 2^n\la.$$
From this, by (\ref{ainf}),
$$w(Q_j\cap E)\le 2^{1+n\d}\la^{\d}w(Q_j)\le 2^{n+1}\la^{\d}w(Q_j).$$
Summing up this inequality yields (\ref{int}), and therefore the proof is complete.
\end{proof}

\begin{proof}[Proof of Theorem \ref{mr}, part (i)]
We will show that if $X$ is $A_{\infty}$-regular, then there exist $C,\d>0$ such that $\f_{X'}(\la)\ge C\la^{-\d}$ for all $\la\in (0,1)$.

Given $g\in X$, take an $A_{\infty}$ weight $w$ such that $|g|\le w$ a.e., where $[w]_{A_{\infty}}\le C_1$ and $\|w\|_{X}\le C_2\|g\|_{X}$.
By Lemma \ref{pr}, there exists $\d>0$ (one can take $\d:=\frac{1}{1+c_nC_1}$) such that
\begin{eqnarray*}
\int_{{\mathbb R}^n}|fg|&\le& \int_{{\mathbb R}^n}|f|w\le 2^{n+1}\la^{\d}\int_{{\mathbb R}^n}(m_{\la}f)w\\
&\le& 2^{n+1}\la^{\d}\|m_{\la}f\|_{X'}\|w\|_{X}\le 2^{n+1}C_2\la^{\d}\|m_{\la}f\|_{X'}\|g\|_{X}.
\end{eqnarray*}
From this, taking the supremum over all $g\in X$ with $\|g\|_{X}=1$ yields
$$\|f\|_{X'}\le 2^{n+1}C_2\la^{\d}\|m_{\la}f\|_{X'}.$$
Hence, $\f_{X'}(\la)\ge \frac{1}{2^{n+1}C_2}\la^{-\d}$, and the proof is complete.
\end{proof}

Turn to part (ii) of Theorem \ref{mr}. This part is a simple combination of several known results, which we will describe below.

Define the Fefferman--Stein sharp function $f^{\#}$ by
$$f^{\#}(x):=\sup_{Q\ni x}\frac{1}{|Q|}\int_Q|f-\langle f\rangle_Q|,$$
where the supremum is taken over all cubes $Q$ containing the point $x$.
It was proved by Fefferman and Stein \cite{FS72} that for every $p>1$ and for all $f\in S_0({\mathbb R}^n)$,
$$\|f\|_{L^p}\le C_{n,p}\|f^{\#}\|_{L^p}.$$
Having this result in mind, we say that a Banach function space $X$ has the Fefferman--Stein property if there exists $C>0$ such that
$$\|f\|_{X}\le C\|f^{\#}\|_{X}$$
for all $f\in S_0({\mathbb R}^n)$.

The following characterization was obtained in \cite[Cor. 4.3]{L10}.

\begin{theorem}[\cite{L10}]\label{fsch} Let $X$ be a Banach function space over ${\mathbb R}^n$, and assume that $M$ is bounded on $X$. Then $M$ is bounded on $X'$ if and only if $X$ has
the Fefferman--Stein property.
\end{theorem}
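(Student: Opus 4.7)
The plan is to establish the two implications separately: the forward direction ($M$ on $X'$ $\Rightarrow$ Fefferman--Stein property) via duality combined with the weighted Coifman--Fefferman inequality, and the converse via a reduction to Theorem~\ref{mr}(ii) through the local sharp maximal function.

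For the forward direction, the key input is that, by Proposition~\ref{ela1}, $M$ bounded on $X'$ is equivalent to $X'$ being $A_1$-regular. Given $f\in S_0(\mathbb{R}^n)$, first assumed bounded with compact support, I would use the duality $\|f\|_X=\sup_{\|g\|_{X'}\le 1}\int|fg|$; for each admissible $g$, take $w\in A_1$ with $|g|\le w$ and $\|w\|_{X'}\le C$. Since $A_1\subset A_\infty$, the Coifman--Fefferman inequality applied in $L^1(w)$ yields
$$\int_{\mathbb{R}^n}|f|w\le\int_{\mathbb{R}^n}Mf\cdot w\le C\int_{\mathbb{R}^n}f^{\#}w\le C\|f^{\#}\|_X\|w\|_{X'},$$
so that $\|f\|_X\le C'\|f^{\#}\|_X$ on taking the supremum over $g$. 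The boundedness and compact support assumptions are then removed by a standard truncation argument combined with the Fatou property (Proposition~\ref{fat}); the initial reduction is needed to ensure the finiteness hypothesis under which Coifman--Fefferman is valid.

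For the converse, since $M$ is already assumed bounded on $X$, it suffices by Theorem~\ref{mr}(ii) to exhibit a single $\lambda_0\in(0,1)$ with $\varphi_X(\lambda_0)>1$. The plan is to bridge $f^{\#}$ and $m_\lambda f$ through the local sharp maximal operator $M^{\#}_\lambda f(x):=\sup_{Q\ni x}\inf_c\bigl((f-c)\chi_Q\bigr)^{*}(\lambda|Q|)$, which satisfies $M^{\#}_\lambda f\le \lambda^{-1}f^{\#}$ by Chebyshev. Using Lerner's local mean oscillation decomposition, $\|f\|_X$ can be dominated by $C_n\|M^{\#}_\lambda f\|_X$ up to a median constant that vanishes for $f\in S_0(\mathbb{R}^n)$; combining this with the Fefferman--Stein inequality and the trivial bound $|f|\le m_\lambda f$ should isolate a strict quantitative gap of the form $\|f\|_X\le(1-\eta_\lambda)\|m_\lambda f\|_X$ for $\lambda$ sufficiently small, which is exactly $\varphi_X(\lambda)>1$.

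The principal obstacle lies in this second implication. The Fefferman--Stein property is only a qualitative comparison, yet we must convert it into a strict, uniform lower bound on $\varphi_X$. The delicate step is passing from the bilateral oscillation $M^{\#}_\lambda f$ (which is symmetric about a constant) to the one-sided quantity $m_\lambda f$, while absorbing the median/constant terms produced by Lerner's formula -- here the restriction $f\in S_0(\mathbb{R}^n)$ is essential, since it provides the decay at infinity needed for the median term to drop out. I expect the cleanest route to be a careful application of Lerner's decomposition at a sufficiently small scale $\lambda$, followed by duality in $X$ to interchange norm estimates between $f$, $f^{\#}$, and $m_\lambda f$.
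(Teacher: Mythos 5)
Your forward direction ($M$ bounded on $X'$ implies the Fefferman--Stein property) is essentially correct and is the standard argument: $A_1$-regularity of $X'$ (Proposition \ref{ela1}, or directly the Rubio de Francia algorithm) plus the weighted good-$\lambda$/Coifman--Fefferman estimate $\int_{\mathbb{R}^n} Mf\,w\le C_{[w]_{A_\infty}}\int_{\mathbb{R}^n} f^{\#}w$ for $f$ bounded with compact support, then duality and $X''=X$. Two caveats: this direction does not actually use the hypothesis $M:X\to X$ (which is harmless), and the passage from bounded, compactly supported $f$ to all of $S_0(\mathbb{R}^n)$ is not a ``standard truncation'': truncation in height is fine ($(\min(f,N))^{\#}\le\tfrac{3}{2}f^{\#}$), but cutting off the support does not preserve control of the sharp function, and one needs the approximation lemma of \cite[Lemma 4.5]{L10} --- the same device this paper uses when proving Theorem \ref{mr}(ii). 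Note also that the paper itself does not prove Theorem \ref{fsch}; it imports it from \cite[Cor.~4.3]{L10}, so any proof must be independent of the results the paper derives \emph{from} it.

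The converse direction is where the proposal breaks down, and the gap is structural rather than technical. You propose to deduce from the Fefferman--Stein property a gap $\varphi_X(\lambda_0)>1$ and then invoke Theorem \ref{mr}(ii). But Theorem \ref{mr}(ii) is proved in this paper by showing that $\varphi_X(\lambda_0)>1$ implies the Fefferman--Stein property (via Theorem \ref{mla}) and then applying Theorem \ref{fsch} itself; likewise Theorem \ref{cor1} rests on Theorem \ref{fsch}. So your reduction is circular: the implication ``Fefferman--Stein $\Rightarrow M:X'\to X'$'' would be derived from itself. Moreover, the independent content you sketch --- extracting $\|f\|_X\le(1-\eta_\lambda)\|m_\lambda f\|_X$ from the FS property via the local mean oscillation decomposition and $M^{\#}_\lambda f\le\lambda^{-1}f^{\#}$ --- is presented only as an expectation, and the cited tools run in the opposite direction: they convert lower bounds on $m_\lambda$ into sharp-function estimates (exactly the mechanism of the proof of Theorem \ref{mr}(ii)), not sharp-function estimates into a strict $m_\lambda$-gap. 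Within the paper's framework the only route from the FS property to a gap in $\varphi_X$ is first to establish $M:X'\to X'$ (the very statement to be proved) and then apply Theorem \ref{mr}(i) to $X'$. A correct proof of this implication therefore has to proceed independently of Theorems \ref{mr} and \ref{cor1}, as is done in \cite{L10}; as written, the hard half of the equivalence is missing.
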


Further, we will use the following pointwise estimate obtained in \cite[Th. 2]{L00}.

\begin{theorem}[\cite{L00}]\label{mla} For any locally integrable function $f$ and for all $x\in {\mathbb R}^n$,
$$m_{\la}(Mf)(x)\le C_{n,\la}f^{\#}(x)+Mf(x).$$
\end{theorem}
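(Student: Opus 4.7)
The plan is to fix $x\in\mathbb{R}^n$ and an arbitrary cube $Q\ni x$, and show that
\[
(Mf\chi_Q)^*(\la|Q|)\le C_{n,\la}\,f^{\#}(x)+C_n\,Mf(x);
\]
taking the supremum over $Q\ni x$ then yields the theorem (possibly with a larger constant in front of $Mf(x)$, which can be folded into $C_{n,\la}$ if one is content with the weaker form $m_{\la}(Mf)(x)\le C_{n,\la}(f^{\#}(x)+Mf(x))$, or improved by a sharper localization if the sharp constant $1$ is desired). The mechanism is the standard Calder\'on--Zygmund localization applied to $f$ on the cube $2Q$, combined with the weak-$(1,1)$ inequality for $M$ and elementary properties of rearrangements.

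First I split $f=f\chi_{2Q}+f\chi_{(2Q)^c}$ and deal with the far piece by a geometric comparison: for $y\in Q$ and any cube $R\ni y$ meeting $(2Q)^c$, one has $\ell(R)\gtrsim\ell(Q)$, so there is a cube $\widetilde R\supset R\cup Q$ with $|\widetilde R|\le c_n|R|$; averaging $|f\chi_{(2Q)^c}|$ over $R$ is then $\le c_n\langle|f|\rangle_{\widetilde R}\le c_n Mf(x)$ since $x\in\widetilde R$. Passing to the supremum gives the pointwise bound $M(f\chi_{(2Q)^c})(y)\le c_n Mf(x)$ on all of $Q$.

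Next I treat the local piece by the further split $f\chi_{2Q}=\bigl(f-\langle f\rangle_{2Q}\bigr)\chi_{2Q}+\langle f\rangle_{2Q}\chi_{2Q}=:g_1+g_2$. Since $x\in 2Q$, the constant piece satisfies $\|Mg_2\|_{L^{\infty}}\le|\langle f\rangle_{2Q}|\le Mf(x)$. For $g_1$ I invoke the weak-type $(1,1)$ inequality for $M$: $(Mg_1)^*(t)\le \frac{C_n}{t}\|g_1\|_{L^1}$, and evaluating at $t=\la|Q|$ yields
\[
(Mg_1\chi_Q)^*(\la|Q|)\le\frac{C_n 2^n}{\la}\cdot\frac{1}{|2Q|}\int_{2Q}\bigl|f-\langle f\rangle_{2Q}\bigr|\le C_{n,\la}\,f^{\#}(x),
\]
where the last inequality uses that $x\in 2Q$ in the definition of $f^{\#}$.

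Finally I combine the three pieces using the elementary fact that if $h=h_1+h_2$ with $h_2$ bounded, then $h^*(t)\le h_1^*(t)+\|h_2\|_{L^{\infty}}$. Applying this first with $h_2=M(f\chi_{(2Q)^c})\chi_Q$ and then with $h_2=Mg_2\chi_Q$ gathers the two bounded contributions into a total of $(c_n+1)Mf(x)$, while the single $L^1$-controlled term contributes $C_{n,\la}f^{\#}(x)$. The main technical subtlety I anticipate is precisely this bookkeeping under the nonlinearity of the rearrangement: one must be careful that the "far" term and the constant piece from the $2Q$-average are added as $L^{\infty}$ constants on $Q$ rather than interacting with the $\la|Q|$-level of the weak-type bound. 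No deep estimate is needed beyond weak $(1,1)$ of $M$ and the geometry of cubes.
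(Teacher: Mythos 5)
Your argument is correct as far as it goes, but it proves a genuinely weaker inequality than the one stated, and the difference is not cosmetic in this paper. Splitting $f=f\chi_{2Q}+f\chi_{(2Q)^c}$ and using sublinearity of $M$ forces you to \emph{add} the two bounded contributions, so your bookkeeping can only give $m_{\la}(Mf)(x)\le C_{n,\la}f^{\#}(x)+(1+c_n)Mf(x)$, with a dimensional constant $c_n>1$ coming from the enlargement factor $|\widetilde R|/|R|$ in the far-field estimate. You acknowledge this and defer the constant $1$ to an unspecified ``sharper localization'', but the constant $1$ in front of $Mf(x)$ is precisely what is used later: in the proof of part (ii) of Theorem \ref{mr} one writes $\f_X(\la_0)\|Mf\|_X\le C_{n,\la_0}\|f^{\#}\|_X+\|Mf\|_X$ and divides by $\f_X(\la_0)-1$, where $\f_X(\la_0)$ may be arbitrarily close to $1$. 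With your constant one would need $\f_X(\la_0)>1+c_n$, which would break Theorem \ref{mr}(ii) as stated, and with it implication (iii)$\Rightarrow$(i) of Theorem \ref{cor1} and the dichotomy of Corollary \ref{cor2} (the unboundedness version (ii)$\Rightarrow$(i), used for the conjectures, would survive). So there is a genuine gap between what you prove and what the statement, and its use here, require.

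The repair, which is essentially how the cited proof in \cite{L00} proceeds, is to split the supremum defining $Mf(y)$ for $y\in Q$ by \emph{scale} into a maximum, rather than splitting $f$ additively. Cubes $R\ni y$ with $\ell(R)\le \ell(Q)$ lie in $3Q$, so their contribution is at most $M\bigl((f-\langle f\rangle_{3Q})\chi_{3Q}\bigr)(y)+|\langle f\rangle_{3Q}|\le M\bigl((f-\langle f\rangle_{3Q})\chi_{3Q}\bigr)(y)+Mf(x)$. For cubes with $\ell(R)>\ell(Q)$, take $\widetilde R\supset R\cup Q$ with $|\widetilde R|\le c_n|R|$ and subtract the mean over $\widetilde R$ \emph{before} enlarging: $\langle|f|\rangle_R\le \frac{|\widetilde R|}{|R|}\langle|f-\langle f\rangle_{\widetilde R}|\rangle_{\widetilde R}+|\langle f\rangle_{\widetilde R}|\le c_nf^{\#}(x)+Mf(x)$, so the dimensional loss multiplies only the oscillation term, which is absorbed into $f^{\#}(x)$. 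Since the two regimes combine through a maximum, only a single $Mf(x)$ with constant $1$ survives: $Mf(y)\le\max\bigl\{M\bigl((f-\langle f\rangle_{3Q})\chi_{3Q}\bigr)(y),\,c_nf^{\#}(x)\bigr\}+Mf(x)$ on $Q$. Your weak-$(1,1)$ step then applies verbatim to the oscillation part, and taking the rearrangement at level $\la|Q|$ and the supremum over $Q\ni x$ gives the stated bound with the sharp coefficient $1$.
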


Now, the second part of Theorem \ref{mr}, modulo some technicalities, is just a combination of two above results.

\begin{proof}[Proof of Theorem \ref{mr}, part (ii)] The proof is almost identical to the proof of a similar result proved in \cite[Th. 4.1]{L10}.

Our goal is to show that if $\f_X(\la_0)>1$ for some $\la_0\in (0,1)$, then $X$ has the Fefferman--Stein property. Thus, by Theorem \ref{fsch}, we would
obtain that $M$ is bounded on $X'$.

Since $(|f|)^{\#}(x)\le 2f^{\#}(x)$ (see, e.g., \cite[p. 155]{G14}), we will assume that $f\ge 0$. By Theorem \ref{mla},
$$\f_{X}(\la_0)\|Mf\|_{X}\le \|m_{\la_0}(Mf)\|_{X}\le C_{n,\la_0}\|f^{\#}\|_{X}+\|Mf\|_{X}.$$
Assuming that $f\in X$, and using that $M$ is bounded on $X$, we obtain that $\|Mf\|_{X}<\infty$.
Therefore, by the above estimate,
\begin{equation}\label{onx}
\|f\|_{X}\le \|Mf\|_{X}\le \frac{C_{n,\la_0}}{\f_X(\la_0)-1}\|f^{\#}\|_{X}.
\end{equation}

Now, in order to show that $X$ has the Fefferman--Stein property, it remains to extend (\ref{onx}) from $f\in X$ to $f\in S_0({\mathbb R}^n)$.
Assume first that $f\in S_{0}({\mathbb R}^n)\cap L^{\infty}$. We will use the fact proved in \cite[Lemma 4.5]{L10} and saying that there is a sequence $\{f_j\}$
of bounded and compactly supported functions such that $f_j\to f$ a.e. and $(f_j)^{\#}(x)\le c_nf^{\#}(x)$.

Observe that, by Proposition \ref{mx}, each $f_j$ belongs to $X$. Therefore, by (\ref{onx}),
$$\|f_j\|_{X}\le C\|(f_j)^{\#}\|_{X}\le C'\|f^{\#}\|_{X}.$$
From this, by Proposition \ref{fat},
$$\|f\|_{X}\le C'\|f^{\#}\|_{X}.$$

It remains to extend this estimate from $f\in S_{0}({\mathbb R}^n)\cap L^{\infty}$ to $f\in S_{0}({\mathbb R}^n)$.
For $f\in S_{0}({\mathbb R}^n)$ and $N>0$ define $f_N:=\min(f,N)$. Then $f_N\in S_{0}({\mathbb R}^n)\cap L^{\infty}$. Using that $(f_N)^{\#}(x)\le \frac{3}{2}f^{\#}(x)$ (see \cite[p. 155]{G14})
and applying the previous estimate, we obtain
$$\|f_N\|_{X}\le C\|f^{\#}\|_{X}.$$
Applying again Proposition \ref{fat} proves the Fefferman--Stein property of~$X$, and therefore the proof is complete.
\end{proof}

\section{Proof of Conjectures \ref{con1} and \ref{con2}}
We start with the following statement which collects some standard properties related to the boundedness of $M$ on a Banach function space~$X$.

\begin{prop}\label{elpr} Let $X$ be a Banach function space, and assume that~$M$ is bounded on $X$. Then
\begin{enumerate}[(i)]
\item $M$ is bounded on $X^r$ for all $r\in (0,1)$;
\item there exists $r>1$ such that $M$ is bounded on $X^r$;
\item $M$ is bounded on $(X')^{r}$ for all $r\in (0,1)$.
\end{enumerate}
\end{prop}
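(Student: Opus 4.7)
My plan is to treat (i) and (ii) by standard reductions and to handle (iii) by combining the $A_1$-regularity of $X$ (Proposition \ref{ela1}) with the classical Muckenhoupt theorem applied along a dominating $A_1$ weight.

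In all three parts the common mechanism is the substitution $g:=|f|^{1/r}$, yielding $|f|=g^r$ and $(Mf)^{1/r}=M(g^r)^{1/r}$. Since $\|f\|_{Y^r}=\|g\|_Y^r$ and $\|Mf\|_{Y^r}=\|M(g^r)^{1/r}\|_Y^r$, boundedness of $M$ on $Y^r$ is equivalent to $\|M(g^r)^{1/r}\|_Y\le C\|g\|_Y$ for every $g\ge 0$. For (i), with $r\in(0,1)$ Jensen's inequality (concavity of $t\mapsto t^r$) gives $M(g^r)^{1/r}\le Mg$ pointwise, so $M$ bounded on $X$ transfers immediately to $X^r$. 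For (ii), the required inequality with some $r>1$ is exactly the self-improvement of the maximal operator due to Lerner--Ombrosi \cite{LO10}, already cited in the proof of Proposition \ref{ela1}.

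The substantive case is (iii). Fix $r\in(0,1)$ and $f\in(X')^r$, so $|f|^{1/r}\in X'$; the reduction above shows it suffices to prove
$$\|(Mf)^{1/r}\|_{X'}\le C\,\||f|^{1/r}\|_{X'}.$$
I would estimate the $X'$-norm by duality. For any $h\ge 0$ with $\|h\|_X\le 1$, Proposition \ref{ela1} produces an $A_1$ majorant $w\ge h$ with $[w]_{A_1}\le C_1$ and $\|w\|_X\le C_2$. Since $1/r>1$ and the $A_p$ constants are monotone decreasing in $p$, we have $w\in A_{1/r}$ with $[w]_{A_{1/r}}\le C_1$, and the classical Muckenhoupt theorem gives that $M$ is bounded on $L^{1/r}(w)$ with constant depending only on $C_1$ and $r$. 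Applied to $f$ this yields $\int (Mf)^{1/r}w\le C_3\int |f|^{1/r}w$, and H\"older's inequality for Banach function spaces closes the estimate:
$$\int (Mf)^{1/r}h\le\int (Mf)^{1/r}w\le C_3\||f|^{1/r}\|_{X'}\|w\|_X\le C_3 C_2\||f|^{1/r}\|_{X'}.$$
Taking the supremum over $h$ with $\|h\|_X\le 1$ yields the desired bound.

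The main obstacle is (iii): the pointwise domination $M(g^r)^{1/r}\le Mg$ exploited in (i) is useless here because $M$ is not assumed bounded on $X'$, so one must detour through an $A_1$ majorant furnished by the $A_1$-regularity of $X$ in order to leverage the hypothesis on $X$ rather than on $X'$.
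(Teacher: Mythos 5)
Your proof is correct. Parts (i) and (ii) coincide with the paper's argument: the reduction to $M_r f = M(|f|^r)^{1/r}$, the pointwise bound $M_r f\le Mf$ for $r\in(0,1)$ via H\"older/Jensen, and the citation of the Lerner--Ombrosi self-improvement \cite{LO10} for some $r>1$. For part (iii), however, you take a genuinely different route. The paper simply invokes the classical Fefferman--Stein inequality \cite{FS71}, $\int_{\mathbb{R}^n}(Mf)^p|g|\le C_{n,p}\int_{\mathbb{R}^n}|f|^p Mg$ with $p=1/r>1$, bounds $\|Mg\|_X\le C\|g\|_X$ by hypothesis, and concludes $\|(Mf)^p\|_{X'}\le C\||f|^p\|_{X'}$ by duality. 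You instead dualize first and replace the test function $h$ by an $A_1$ majorant $w$ supplied by the $A_1$-regularity of $X$ (Proposition \ref{ela1}), then use $[w]_{A_{1/r}}\le[w]_{A_1}$ and Muckenhoupt's theorem on $L^{1/r}(w)$ before closing with H\"older in the associate pair. Your steps all check out (the uniform constants $C_1,C_2$ and the weighted bound depending only on $[w]_{A_{1/r}}$, $r$, $n$ are exactly what is needed), and in effect you reprove the Fefferman--Stein inequality in this dualized form. The trade-off: your argument leans on heavier machinery --- Coifman--Rochberg and the \cite{LO10} self-improvement hidden inside Proposition \ref{ela1}, plus the weighted $L^p$ theory --- whereas the paper's version of (iii) needs only the classical unweighted inequality of \cite{FS71} together with the boundedness of $M$ on $X$; on the other hand, your route makes transparent how (iii) is an instance of the $A_1$-majorant (Rubio de Francia-type) philosophy that the rest of the paper exploits.
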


\begin{proof} Observe that $M$ is bounded on $X^r$ if and only if $M_r$ is bounded on $X$. Therefore, part (i) follows trivially by H\"older's inequality.
In turn, part (ii) is just a reformulation of the result \cite{LO10} saying that if $M$ is bounded on $X$, then there exists $r>1$ depending only on $\|M\|_{X\to X}$ such
that $M_r$ is also bounded on $X$.

Part (iii) is an immediate consequence of the Fefferman--Stein inequality \cite{FS71} saying that for all locally integrable $f,g$ and for all $p>1$,
$$\int_{{\mathbb R}^n}(Mf)^p|g|dx\le C_{n,p}\int_{{\mathbb R}^n}|f|^p(Mg)dx.$$
From this
$$\int_{{\mathbb R}^n}(Mf)^p|g|dx\le C_{n,p}\||f|^p\|_{X'}\|Mg\|_{X}\le C\||f|^p\|_{X'}\|g\|_{X}.$$
Hence, by duality, for all $p>1$,
$$\|(Mf)^p\|_{X'}\le C\||f|^p\|_{X'},$$
which finishes the proof.
\end{proof}

As we mentioned in the Introduction, the implications $({\rm i})\Rightarrow ({\rm ii})$ are known for both Conjectures \ref{con1} and \ref{con2}. For the sake of the completeness we give different proofs based on Theorem \ref{mr}.

\begin{proof}[Proof of Conjecture \ref{con1}]
Observe that since $X$ is $r$-convex and $s$-concave, the space $X_{r,s}$ is a Banach function space (see \cite{N23}). Therefore, by the Lorentz--Luxembourg theorem,
$\big[(X^r)'\big]^{(\frac{s}{r})'}=X_{r,s}'$.

Let us start with the implication $({\rm i})\Rightarrow ({\rm ii})$. Since $M$ is bounded on $X_{r,s}$, by Theorem \ref{mr}, the function $\f_{\big[(X^r)'\big]^{(\frac{s}{r})'}}$ is unbounded. Hence, $\f_{(X^r)'}$ is unbounded
as well. Next, since $M$ is bounded on $\big[(X^r)'\big]^{(\frac{s}{r})'}$, by the first part of Proposition \ref{elpr}, $M$ is bounded on $(X^r)'$. This along with unboundedness of $\f_{(X^r)'}$ implies,
by Theorem \ref{cor1}, that $M$ is bounded on $(X^r)''=X^r$.

Turn to $({\rm ii})\Rightarrow ({\rm i})$. Since $M$ is bounded on $X^r$, by Theorem \ref{mr}, $\f_{(X^r)'}$ is unbounded. Hence, $\f_{\big[(X^r)'\big]^{(\frac{s}{r})'}}$ is unbounded as well. This coupled with the
boundedness of $M$ on $\big[(X^r)'\big]^{(\frac{s}{r})'}$ implies, by Theorem \ref{cor1}, that $M$ is bounded on $X_{r,s}$.
\end{proof}

\begin{proof}[Proof of Conjecture \ref{con2}] Let us start with the implication $({\rm i})\Rightarrow ({\rm ii})$. In the previous proof we showed that if $M$ is bounded on $X_{r,s}$ and $X_{r,s}'$,
then $M$ is bounded on $X^r$. Using the fact that $X_{r,s}'=(X')_{s',r'}$ \cite[Pr.~2.14]{N23}, in a similar way we obtain that $M$ is bounded on $(X')^{s'}$.

Turn to $({\rm ii})\Rightarrow ({\rm i})$. Since $M$ is bounded on $(X')^{s'}$, by the first part of Proposition \ref{elpr}, $M$ is bounded on $X'$. Hence, by Theorem \ref{mr}, $\f_X$ is unbounded, and
so $\f_{X^r}$ is unbounded as well. This coupled with the boundedness of $M$ on $X^r$ implies, by Theorem \ref{cor1}, that $M$ is bounded on $(X^r)'$.
In a similar way we obtain that $M$ is bounded on $[(X')^{s'}]'$.

Setting $Y:=\big[(X^r)'\big]^{(\frac{s}{r})'}$ and $q:=(\frac{s}{r})'$, we obtain that $Y^{1/q}$ and $(Y^{1/q})'$ are $A_1$-regular. Therefore, by Theorem \ref{rch}, $Y'=X_{r,s}$ is $A_q$-regular.
Further, it was shown in the proof of \cite[Pr. 2.14]{N23} that $X_{r,s}^{\theta}=[(X')^{s'}]'$ for $\theta:=\frac{1}{(r'/s')'}$. Hence, $M$ is bounded on $X_{r,s}^{\theta}$, which, along with the $A_q$-regularity
of $X_{r,s}$, proves, by Proposition \ref{propr}, that $M$ is bounded on $X_{r,s}$. By symmetry, using that $X_{r,s}'=(X')_{s',r'}$ and applying the same argument, we obtain that $M$ is bounded on $X_{r,s}'$,
and, therefore, the proof is complete.
\end{proof}

In order to further illustrate the method based on Theorem \ref{mr}, we give an alternative proof of the following recent result of Lorist and Nieraeth \cite{LN23}.

\begin{theorem}[\cite{LN23}] Let $r^*\in (1,\infty)$ and let $X$ be an $r^*$-convex Banach function space over ${\mathbb R}^n$.
Then the following are equivalent:
\begin{enumerate}[(i)]
\item
We have $M:X\to X$ and $M:X'\to X'$;
\item There is an $r_0\in (1,r^*]$ so that for all $r\in (1,r_0)$ we have
$$M:X^r\to X^r,\quad M:(X^r)'\to (X^r)';$$
\item
There is an $r\in (1,r^*]$ so that $M:(X^r)'\to (X^r)'$.
\end{enumerate}
\end{theorem}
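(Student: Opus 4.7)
The plan is to treat the equivalence as a shuttle of boundedness between the scaled spaces $X^r$ and their associates $(X^r)'$, with the function $\f_{\cdot}$ serving as the common invariant. Two ingredients do all the work: the scaling identity $\f_{X^r}(\la)=\f_X(\la)^r$ recorded in the introduction, and the equivalence of ``$M$ on $X'$'' with ``$\f_X$ unbounded'' provided by Theorem~\ref{cor1} and Theorem~\ref{mr}(i). The implication (ii)$\Rightarrow$(iii) is immediate by choosing any $r\in(1,r_0)$, so the real content lies in (i)$\Rightarrow$(ii) and (iii)$\Rightarrow$(i).

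For (i)$\Rightarrow$(ii), I would first invoke Proposition~\ref{elpr}(ii) to produce some $r_1>1$ with $M$ bounded on $X^{r_1}$, and set $r_0:=\min(r_1,r^*)$. For any $r\in(1,r_0)$, Jensen's inequality gives the pointwise bound $M_rf\le M_{r_1}f$, so $M$ is bounded on $X^r$. To pass to $(X^r)'$, I would use the hypothesis $M:X'\to X'$ combined with Theorem~\ref{mr}(i) applied to $X'$: this makes $\f_{X''}=\f_X$ unbounded, which by the scaling identity upgrades to $\f_{X^r}$ unbounded. Together with the boundedness of $M$ on $X^r$, Theorem~\ref{cor1} then yields $M:(X^r)'\to(X^r)'$.

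For the main direction (iii)$\Rightarrow$(i), fix $r\in(1,r^*]$ such that $M$ is bounded on $(X^r)'$. To obtain $M:X\to X$, I apply Proposition~\ref{elpr}(iii) with $(X^r)'$ in place of $X$: since $M$ is bounded on $(X^r)'$, it is bounded on $((X^r)'')^s=(X^r)^s$ for every $s\in(0,1)$; choosing $s=1/r$ and observing that $(X^r)^{1/r}=X$ with equal norms gives $M:X\to X$. To obtain $M:X'\to X'$, I apply Theorem~\ref{mr}(i) to $(X^r)'$: this says $\f_{(X^r)''}=\f_{X^r}$ is unbounded, and by the scaling identity so is $\f_X$; invoking Theorem~\ref{cor1} with $M:X\to X$ already in hand then produces $M:X'\to X'$.

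The main point to watch throughout is the $r^*$-convexity ceiling: every scaled space $X^r$ appearing in the argument must be a genuine Banach function space in order for the Lorentz--Luxembourg identity $(X^r)''=X^r$ and the machinery of Theorem~\ref{mr} to apply. In the proof of (i)$\Rightarrow$(ii) this is precisely why $r_0$ must be capped at $r^*$, and in (iii)$\Rightarrow$(i) the hypothesis already provides $r\le r^*$, so $X^r$ and $(X^r)'$ are automatically legitimate Banach function spaces.
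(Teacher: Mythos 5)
Your proposal is correct and follows essentially the same route as the paper: boundedness on $X^r$ from Proposition \ref{elpr}(i)--(ii), $M:X\to X$ from Proposition \ref{elpr}(iii) applied to $(X^r)'$, and the transfer to associate spaces via the unboundedness of $\f_X$ (Theorem \ref{mr}(i), the scaling identity $\f_{X^r}=\f_X^r$, and Theorem \ref{cor1}). The extra details you supply (the choice $s=1/r$, the $r^*$-convexity ensuring $X^r$ is a Banach function space so that $(X^r)''=X^r$) are exactly what the paper leaves implicit.
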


\begin{proof}
We start with $({\rm i})\Rightarrow ({\rm ii})$. By a combination of the first two parts of Proposition \ref{elpr}, there is an $r_0\in (1,r^*]$ so that for all $r\in (1,r_0)$ we have
$M:X^r\to X^r$. Next, by Theorem \ref{mr}, the function $\f_X$ is unbounded. Hence, $\f_{X^r}$ is unbounded as well.
Therefore, applying Theorem \ref{cor1}, we have  $M:(X^r)'\to (X^r)'$. Next, the implication $({\rm ii})\Rightarrow ({\rm iii})$ is trivial.

Turn to $({\rm iii})\Rightarrow ({\rm i})$. By Theorem \ref{mr}, $\f_{X^r}$ is unbounded. Hence, $\f_X$ is unbounded as well. Further, by the third part of Proposition~\ref{elpr},
$M:(X^r)'\to (X^r)'$ implies $M:X\to X$. It remains to apply Theorem~\ref{cor1} in order to conclude that $M:X'\to X'$.
\end{proof}

\end{document}